\let\@@enum@org\@@enum@
\def\@@enum@[#1]{\@@enum@org[\normalfont #1]}
\newcommand\ba{\begin{align*}}
\newcommand\ea{\end{align*}}
\newcommand\be{\begin{enumerate}}
\newcommand\ee{\end{enumerate}}
\newcommand\bp{\begin{proof}}
\newcommand\ep{\end{proof}}
\newcommand\bpp{\begin{prop}}
\newcommand\epp{\end{prop}}
\newcommand\bpb{\begin{prob}}
\newcommand\epb{\end{prob}}
\newcommand\bd{\begin{defn}}
\newcommand\ed{\end{defn}}
\newcommand\bh{\begin{hint}}
\newcommand\eh{\end{hint}}
\newcommand\bC{\mathbb{C}}
\newcommand\bH{\mathbb{H}}
\renewcommand\Re{\mathbb{R}}
\newcommand\bR{\mathbb{R}}
\newcommand\bQ{\mathbb{Q}}
\newcommand\Q{\mathbb{Q}}
\newcommand\bZ{\mathbb{Z}}
\newcommand\Z{\mathbb{Z}}
\newcommand\FF{{\mathcal F}}
\newcommand\HH{{\mathcal H}}
\newcommand\MM{{\mathcal M}}
\newcommand\PP{{\mathcal P}}
\newcommand\SSS{{\mathcal S}}
\newcommand\TT{{\mathcal T}}
\newcommand\PMF{{\PP\kern-2pt\MM\FF}}
\newcommand\gam{\Gamma}
\DeclareMathOperator\harm{harm}
\newcommand{\Ga}{{\Gamma}}
\newcommand{\G}{{\Gamma}}
\newcommand{\pslc}{{\mathrm{PSL}_2 (\mathbb{C})}}
\newcommand{\pslr}{{\mathrm{PSL}_2 (\mathbb{R})}}
\newcommand{\pslz}{{\mathrm{PSL}_2 (\mathbb{Z})}}
\newcommand{\so}{{\mathrm{SO}}}
\newcommand{\su}{{\mathrm{SU}}}
\newcommand{\eps}{{\epsilon}}
\newcommand{\comme}{\operatorname{Comm}}
\renewcommand{\MR}[1]
{\href{http://www.ams.org/mathscinet-getitem?mr=#1}{MR#1}}
\def\thetitle{Commensurators of thin normal subgroups and abelian quotients}
\def\theshorttitle{Commensurators of thin groups}
\newtheorem{theorem}{Theorem}[section]
\newtheorem{lem}[theorem]{Lemma}
\newtheorem{lemma}[theorem]{Lemma}
\newtheorem{rmk}[theorem]{Remark}
\newtheorem{cor}[theorem]{Corollary}
\newtheorem{prop}[theorem]{Proposition}
\newtheorem{obs}[theorem]{Observation}
\newtheorem{qn}[theorem]{Question}
\newtheorem*{claim*}{Claim}
\newtheorem{claim}[theorem]{Claim}
\theoremstyle{remark}
\theoremstyle{definition}
\newtheorem{defn}[theorem]{Definition}
\begin{document}

\title[\theshorttitle]\thetitle
\date{\today}
\keywords{}

\author[T. Koberda]{Thomas Koberda}
\address{Department of Mathematics, University of Virginia, Charlottesville, VA 22904-4137, USA}
\email{thomas.koberda@gmail.com}
\urladdr{http://faculty.virginia.edu/Koberda}

\author{Mahan Mj}
\address{School of Mathematics, Tata Institute of Fundamental Research, 1 Homi Bhabha Road, Mumbai 400005, India}

\email{mahan@math.tifr.res.in}
\email{mahan.mj@gmail.com}
\urladdr{http://www.math.tifr.res.in/~mahan}

\thanks{The first author was partially supported by an Alfred P. Sloan Foundation Research Fellowship, by NSF Grant DMS-1711488,
and by NSF Grant DMS-2002596 while this research was carried out.
	The second author is  supported by  the Department of Atomic Energy, Government of India, under project no.12-R\&D-TFR-5.01-0500DST 
	and also in part by a Department of Science and Technology JC Bose Fellowship, and an endowment from the Infosys Foundation. Both authors were partially supported by the grant 346300 for IMPAN from the Simons Foundation and the matching 2015-2019 Polish MNiSW fund (code: BCSim-2019-s11).}
\subjclass[2010]{22E40 (Primary), 20F65, 20F67, 57M50}
\keywords{commensurator, Hodge theory, coarse preservation of lines,  arithmetic lattice, thin subgroup}

\date{\today}

\begin{abstract}
We give an affirmative answer to many cases of a question
 due to Shalom, which asks if the commensurator of a thin subgroup of a Lie group is discrete.
In this paper, let $K<\Ga<G$ be an infinite normal subgroup of an arithmetic lattice $\Ga$ in a rank one simple Lie group $G$,
such that the  quotient $Q=\Ga/K$ is infinite. We show that the commensurator of $K$ in $G$ is discrete, provided that $Q$ admits
a surjective homomorphism to $\bZ$.
In this case, we also show
that the commensurator of $K$ contains the normalizer of $K$ with finite index. We thus vastly generalize a result of the authors \cite{KM18},
which showed that many natural normal subgroups of $\pslz$ have discrete commensurator in $\pslr$.
\end{abstract}

\maketitle

\setcounter{tocdepth}{1}\tableofcontents


\section{Introduction}\label{sec:intro}

Let $G$ be a semi-simple $\Q$--algebraic group, and let $G(\bZ)$ denote its group of integer points.
Roughly speaking, a subgroup $\gam$ of $G$ is called \emph{arithmetic}
if it is \emph{commensurable in a wide sense} with $G(\bZ)$~\cite{morris-book}.
That is, there is an element $g\in G$ such that the group $G(\bZ)\cap\gam^g$ has
finite index in both $G(\bZ)$ and $\gam^g$.
In general, if $G$ is an algebraic group and $\gam<G$ is a subgroup, we write
$\comme_G(\gam)$ for the \emph{commensurator} of $\gam$ in $G$, i.e.~the subgroup consisting of $g\in G$
such that $\gam\cap\gam^g$ has
finite index in both $\gam$ and $\gam^g$.  The Commensurability Criterion for
Arithmeticity due to Margulis~\cite{margulis,morris-book} characterizes arithmetic subgroups
of algebraic groups via their commensurators.
A convention we shall follow throughout in this article: whenever we refer to
a semi-simple Lie group, we shall mean  a connected semi-simple real Lie group with no compact factors, unless noted otherwise.

\begin{theorem}[Margulis]
	Let $G$ be a semi-simple Lie group with no compact factors and let $\gam$ be an irreducible lattice in $G$.
	Then $\gam$ is arithmetic
	if and only if $\comme_G(\gam)$ is dense in $G$.
\end{theorem}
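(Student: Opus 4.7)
The plan is to prove the two implications separately. The forward direction (arithmeticity $\Rightarrow$ dense commensurator) is elementary; the reverse direction is the substantial content due to Margulis.

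For the forward direction, after passing to a wide-commensurable subgroup we may assume $\gam = G(\bZ)$ for some $\bQ$-structure on $G$. For any $g \in G(\bQ)$, a principal congruence subgroup of $\gam$ of sufficiently high level (chosen to clear the denominators in $g$) lies in $\gam \cap \gam^g$, so this intersection has finite index in both $\gam$ and $\gam^g$. Hence $G(\bQ) \subseteq \comme_G(\gam)$, and density of $G(\bQ)$ in $G(\bR) = G$, a consequence of real approximation for semi-simple $\bQ$-groups, yields the claim.

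For the reverse direction one splits into higher rank and rank one. In higher rank, Margulis's arithmeticity theorem already guarantees that every irreducible lattice $\gam$ is arithmetic, so there is nothing to prove beyond the forward direction. In rank one one must show that whenever $\gam$ is non-arithmetic, $\comme_G(\gam)$ fails to be dense. For $\mathrm{Sp}(n,1)$ and $F_4^{-20}$ one invokes the superrigidity theorems of Corlette and Gromov--Schoen, which again force arithmeticity unconditionally and reduce the statement to the forward direction. In $\so(n,1)$ and $\su(n,1)$, where genuine non-arithmetic lattices exist, one runs Margulis's original argument: a dense commensurator $\Lambda := \comme_G(\gam)$ together with the local rigidity of $\gam$ is used to manufacture a $\bQ$-form of $G$ with respect to which $\gam$ is integral, contradicting non-arithmeticity.

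The main obstacle is precisely this last step in $\so(n,1)$ and $\su(n,1)$, where superrigidity is unavailable and the $\bQ$-structure has to be extracted directly from density properties of $\Lambda$. Concretely, one controls the trace field of the adjoint representation $\operatorname{Ad}(\gam) \subset \operatorname{GL}(\mathfrak{g})$ via the pseudo-action of $\Lambda$, shows that this trace field is a number field, and verifies that $\operatorname{Ad}(\gam)$ preserves an order in the corresponding $\bQ$-algebra. Once this $\bQ$-form is in hand, arithmeticity of $\gam$ follows.
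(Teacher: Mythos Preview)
The paper does not prove this theorem. It is stated in the introduction as background, attributed to Margulis with references to \cite{margulis,morris-book}, and is used only to motivate the study of commensurators of thin subgroups. There is therefore no proof in the paper to compare your proposal against.

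As for the proposal itself: it is an outline rather than a proof, and the outline has structural problems. Invoking higher-rank arithmeticity and the Corlette and Gromov--Schoen superrigidity theorems is both anachronistic (these results postdate the commensurability criterion) and circular in spirit, since Margulis's criterion is meant to be a self-contained characterization of arithmeticity, not a corollary of later arithmeticity theorems. More seriously, the one case where you cannot appeal to an external arithmeticity theorem---non-arithmetic lattices in $\so(n,1)$ and $\su(n,1)$---is exactly where the content lies, and there your argument reduces to ``one runs Margulis's original argument'' followed by a vague sketch involving trace fields. The reference to a ``pseudo-action of $\Lambda$'' is also misplaced: the pseudo-action is a notion introduced in the present paper for an entirely different purpose and plays no role in Margulis's proof. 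The actual argument (see \cite{margulis} or \cite[Chapter~6]{zimmer-book}) constructs the $\bQ$-structure uniformly from the dense commensurator without any case split by rank or by the ambient group.
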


Here, we are primarily concerned with the discreteness properties of commensurators of \emph{thin groups}, a class of groups which has received a large amount of attention in recent
years~ \cite{sarnak-thin}.
A subgroup $K<G$ is \emph{thin} if $K$ is discrete and
Zariski dense in $G$, and if $G/K$ has infinite volume with respect to the Haar measure on $G$. Thus, $K$ fails to be a lattice in $G$
only by virtue of having infinite covolume in $G$. Natural examples of thin groups arise from infinite index
Zariski dense subgroups of lattices in $G$.

In the present manuscript, we continue our previous investigations from \cite{KM18}  of the following question due to Shalom (see especially
\cite{shalom-gafa}, wherein the problem has its genesis):

\begin{qn} \cite{llr}
	\label{mainq}
	Let
	$K$
	be a thin subgroup of a semi-simple Lie group
	$G$.  
	\begin{enumerate}
\item 	Is
	the commensurator $\comme_G(K)$ of
	$K$ in $G$ discrete?
	\item In particular, is the normalizer of $K$ in $G$ of finite index in $\comme_G(K)$?
	\end{enumerate}  
\end{qn}

For an infinite normal subgroup $K$ of a lattice $\Ga$, the two sub-questions of Question \ref{mainq} are equivalent.
Indeed, the commensurator
of  $K$ contains its normalizer, which contains $\Ga$. Since $\Ga$ is a lattice, we see that
if $\comme_G(K)$ is  discrete then it is a finite index super-lattice
of $\Ga$.  For the other implication, any such $K$ is discrete and Zariski  dense, and thus has a discrete normalizer
(cf.~Lemma~\ref{lem:normalizer}). Since the normalizer of $K$ contains $\gam$ and since $\gam$ has finite covolume, we have
that the normalizer of $K$ is itself a lattice. Thus, if the commensurator of $K$ contains the normalizer of $K$ with finite index then the
commensurator is discrete.

Positive answers to Question \ref{mainq} are known for all finitely generated thin subgroups $K$ of $\pslr$ and $\pslc$
\cite{greenberg,llr,mahan-commens}, and for
thin subgroups of a semi-simple Lie groups with limit set
a proper subset of the Furstenberg boundary \cite{mahan-commens}. Here, the \emph{limit set} is a generalization of the limit set occurring
in the theory of Kleinian groups, and is a minimal nonempty
 closed invariant subset of the Furstenberg boundary for a group acting on the corresponding
symmetric space (see~\cite{benoistgafa}).

We were thus prompted in \cite{KM18} to address
Question \ref{mainq} when the ambient Lie group is the simplest possible, viz.~$\pslr$, for thin groups whose limit sets consist of the
entire Furstenberg boundary, i.e.~ $S^1=\partial\bH^2$. More generally, natural examples of thin groups with limit set equal to the
Furstenberg boundary come from normal subgroups of rank one lattices. This general problem provides the context for this paper.

\subsection{Main result}
Since many rank one arithmetic lattices surject onto nonabelian free groups, every finitely generated group can be realized as a quotient of an
arithmetic lattice. Observe in particular, that all finitely generated free groups arise as finite index subgroups of $\Ga(2)$, the level-two
congruence subgroup of $\pslz$,  and therefore  all infinite, finitely generated groups arise as quotients of a rank one  arithmetic lattice by
a thin normal  subgroup. This level of generality has led us to impose 
some natural algebraic conditions on the quotient $Q$. 
We will establish the following result, which handles normal subgroups with ``nice" quotients.

\begin{theorem}\label{thm:main-general}
Let $\Ga<G$ be an arithmetic lattice in a rank one simple Lie group $G$ and let $K<\Ga$ be an infinite
normal subgroup. Write $Q=\Ga/K$ for
the corresponding quotient group. Then the group $\comme_G(K)$ is discrete, provided that
the group $Q$ admits a surjective homomorphism to $\bZ$.
Under  these hypotheses, the commensurator of  $K$ in $G$ contains the normalizer of $K$ with finite index.
\end{theorem}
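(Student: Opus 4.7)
By the paragraph following Question~\ref{mainq}, the two assertions of Theorem~\ref{thm:main-general} are equivalent, so it suffices to prove $[\comme_G(K):N_G(K)]<\infty$. The strategy is to convert the hypothesis $Q\twoheadrightarrow \bZ$ into a canonical harmonic object on the arithmetic locally symmetric space, and then to rigidify the pseudo-action of $\comme_G(K)$ on this object via Hodge theory.

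More concretely, the composition $\psi\co\Ga\twoheadrightarrow Q\twoheadrightarrow \bZ$ defines a nonzero class in $H^1(\Ga,\bR)$ whose kernel contains $K$. By $L^2$ Hodge theory on the finite-volume rank-one locally symmetric space $M=\Ga\backslash X$, this class is represented uniquely by a harmonic $1$-form $\omega$; since $\psi$ vanishes on $K$, the pullback of $\omega$ to the infinite-volume cover $M_K=K\backslash X$ is exact, say $d\til f$ for a harmonic function $\til f\co X\to \bR$ with $\til f(kx)=\til f(x)$ for $k\in K$ and $\til f(\gamma x)=\til f(x)+\psi(\gamma)$ for $\gamma\in\Ga$. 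For $g\in \comme_G(K)$, the translate $\til f\circ g$ is again harmonic, is invariant under the finite-index subgroup $K\cap K^{g^{-1}}\le K$, and satisfies an integer-shift equivariance under $\Ga\cap \Ga^{g^{-1}}$; in particular it lies in a single natural space $\VV$ of harmonic functions on $X$ assembled from classes in $H^1$ of finite-index supergroups of $K$ in $\Ga$ that vanish on $K$. The plan is to show that $\VV$ is finite-dimensional and that the map $g\mapsto \til f\circ g$ factors through an action of $\comme_G(K)/N_G(K)$ on $\VV$ preserving the $\bZ$-lattice of integer shifts, whence this quotient is finite.

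The principal obstacle is proving finite-dimensionality and rigidity of $\VV$: $L^2$ Hodge theory does not apply directly to the infinite-volume cover $M_K$, so the analytic step requires growth-controlled harmonic analysis on the ends of rank-one locally symmetric spaces, or an algebraic substitute built from the $\Ga$-module structure on $H_1(K,\bZ)$ and its surjection to $\bZ$. Granting this, the remaining steps — identifying the integer shift lattice and extracting a finite image of $\comme_G(K)/N_G(K)$ — should follow from standard lattice arguments, together with the fact, noted in the introduction, that $K$ being Zariski-dense and discrete forces $N_G(K)$ itself to be discrete and hence a lattice containing $\Ga$ with finite index.
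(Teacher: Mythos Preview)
Your proposal shares the opening move with the paper---pull back a nonzero class from $H^1(Q,\bQ)$ to $H^1(\Ga,\bQ)$ and represent it by a harmonic $1$-form---but then diverges, and the divergence contains a genuine gap.

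First, a correction to the setup: it is \emph{not} true that every class in $H^1(\Ga,\bR)$ has an $L^2$ harmonic representative when $M=\Ga\backslash X$ is noncompact. Only classes in the image of $H^1_c(M,\bR)\to H^1(M,\bR)$ are so represented (Lemma~\ref{lem:lueck}). The paper handles this by a case split: if the pullback of $H^1(Q,\bQ)$ meets compactly supported cohomology nontrivially, harmonic-form arguments apply; otherwise the class is detected on the cusps, and one uses discrete patterns of horoballs (Claim~\ref{claim1}) instead of Hodge theory. Your plan has no mechanism for this second case.

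Second, and more seriously, the ``principal obstacle'' you name---finite-dimensionality of the space $\VV$ of harmonic functions on $X$ with the prescribed equivariance---is where essentially all the content would live, and you offer no argument for it. The paper avoids this entirely. Rather than building a representation of $\comme_G(K)/N_G(K)$, the paper shows (via the pseudo-action machinery of Section~\ref{sec:pseudoaction}, crucially Theorem~\ref{thm:comm-nonab}) that every element of $K^G=\langle K^g:g\in\comme_G(K)\rangle$ preserves the single pulled-back form $\omega_X$ on $X$. Then Proposition~\ref{prop-generalG} gives discreteness of the stabilizer of $\omega_X$ in $G$ by a compact-dual argument: a $G$-invariant harmonic $1$-form on $X$ would descend to $S^n$ or $\bC\mathbb{P}^n$, which have $b_1=0$. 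This step uses neither infinite-volume harmonic analysis nor any finite-dimensionality of a function space.

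Finally, your outline never invokes arithmeticity of $\Ga$, but this is essential: it is Proposition~\ref{commens=commesn} (Borel) that gives $\comme_G(K)<\comme_G(\Ga)$, without which one cannot even form the common-cover objects $W$ on which the pseudo-action comparison takes place. Your claim that $g\mapsto\til f\circ g$ factors through $\comme_G(K)/N_G(K)$ is also unjustified: for $n\in N_G(K)$ there is no reason $\til f\circ n=\til f$ (only $K$-invariance is preserved), so the quotient action is not defined as stated.
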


The reader is directed to Theorem~\ref{thm:q-b1}
for the context and proof surrounding the  main result.
Note that the hypotheses of  Theorem \ref{thm:main-general} are never satisfied for irreducible lattices in
higher rank 
 nor for lattices
in  the rank one simple Lie groups $\mathrm{Sp}(n,1)$ for $n\geq 2$, nor in
the exceptional group $F_{4}^{-20}$. 
This is because lattices in these Lie groups have Kazhdan's Property (T). Thus,
Theorem \ref{thm:main-general} is vacuously true in these cases.
Therefore
in the course of establishing Theorem~\ref{thm:main-general},
we pay  exclusive attention to $G\in\{\so(n,1),\su(n,1)\}_{n\geq 2}$, which give rise to real and complex hyperbolic spaces as the
associated symmetric spaces of  noncompact type, respectively.

In \cite{KM18} we answered Question \ref{mainq} in the special case that $K$  is the commutator subgroup of
$\Ga$, where $\Ga< \pslz$ is a finite index normal
subgroup of $\pslz$ contained in a principal congruence subgroup $\G(k)$ for some $k\geq 2$. We vastly generalize this result, since 
if $K=[\Ga,\Ga]$ has infinite index in $\Ga$ then $K$ falls under the purview of Theorem~\ref{thm:main-general}.

\subsection{Tools and Techniques}
The main theorems and techniques of \cite{KM18} are the starting point
of this paper.

\noindent {\bf Preserving lines with holes:}
An important technical tool introduced in \cite{KM18} was that of a homology pseudo-action. We adapt it here to the notion of
\emph{preservation of lines with holes}.
Let $\Ga$ be a lattice in  a rank one simple Lie group $G$, let $K<\Ga$ be a normal subgroup, and let $Q=\Ga/K$. Quite generally,
for $g \in G$ we say that $g$ preserves  $Q$--lines with holes if for all $\gamma \in \Ga$,
there exists $N > 0$ such that 
\[\gamma^n \cong (\gamma^n)^g \, (\mathrm{mod} \, K)\textrm{ for all $n\in N\bZ$}.\]
The terminology arises from thinking of infinite cyclic groups as ``lines" and a finite index subgroup of an infinite cyclic group as a ``line with holes". We direct the
reader to
Section \ref{sec:pseudoaction} for a detailed discussion. 

The usefulness of preserving lines with holes is illustrated by the following purely group-theoretic fact,
 which provides a rather general criterion for deciding non-commensurability (see Theorem \ref{thm:comm-nonab}):

\begin{theorem}\label{maincrit}
	Let $\Ga<G$, let $K<\Ga$ be normal, and let $Q = \Ga/K$. If \[g \in \comme_GK \cap \comme_G\Ga,\] then 
 $K^g:=g^{-1}Kg$ preserves $Q$--lines with holes.
\end{theorem}

\noindent {\bf Harmonic forms and maps:}
The other principal tool used in this paper comes from harmonic forms 
and harmonic maps via Hodge theory.
These include classical
Hodge theory and its $L^2$ analogue for non-compact manifolds.
 Preservation of $Q$--lines with holes, or equivalently, lines with holes
in $\Ga$ modulo the normal subgroup $K$ can be promoted to something stronger: the harmonic 
form
 allows us to convert the ``coarse'' lines in $\Ga/K$ into actual maps to
$\bR$, i.e.\ it allows us to ``fill the holes" of coarse lines in a canonical fashion, and thus find canonical $G$--invariant maps
to $\bR$.\\

\noindent {\bf Discrete patterns:}
Harmonic maps are coupled with the notion of discrete patterns, an idea going back to Schwartz \cite{schwartz-inv}, and
which was exploited in proving discreteness of commensurators in \cite{llr,mahan-commens}.
Throughout the paper, many of our ideas and methods are inspired by the basic example of arithmetic 
hyperbolic surfaces as well  as the special case $K=[\Ga,\Ga]$, and in some places we explicate the underlying geometric intuition.
In the context of $\pslr$ and hyperbolic surfaces, Teichm\"uller--theoretic notions such as
zeros and saddle connections of abelian differentials provide us the necessary discrete patterns that are preserved by the commensurator
when the underlying surface has positive genus {\it and} lines with holes in the integral homology are preserved.
Preservation of such discrete patterns finally ensures that the commensurator is discrete.
With the notion of preserving homological lines with holes in place,  the discussion for  lattices in $SO(n,1)$ and $SU(n,1)$ splits into 
uniform and non-uniform cases. For uniform lattices, we use Hodge theory coupled with a Lie--theoretic idea that we learned from
 Venkataramana and Agol \cite{venky,agol-form}.
For non-uniform lattices, we use $L^2-$Hodge theory along with the fact that
preservation of homology lines with holes 
 guarantees the preservation of a discrete pattern given by horoballs.
Discreteness of a pattern-preserving subgroup is an essential ingredient in the non-vanishing cuspidal cases:
see the proof of Theorem~\ref{thm:q-b1}, especially Claim~\ref{claim1} therein.\\

 \noindent {\bf Relationship with existing literature:} The previous works \cite{llr,mahan-commens} on discreteness of commensurators
 derived discreteness by showing that the commensurator preserves a ``discrete geometric sub-object" or   ``pattern" in the sense of
 Schwartz \cite{schwartz-pihes}. These may be regarded as a collection of geometrically defined subspaces of the domain symmetric
 space $X$. We refer the reader to Appendix \ref{sec-app} for the material on patterns that will be used in this paper. There is a shift in focus in this paper, as we look at naturally defined dual objects. The canonical
 nature of harmonic maps ensures that they are preserved by the commensurator.
We  derive  much  of our inspiration from Shalom's  work
  \cite{shalom-annals,shalom2000,shalom-gafa}.
 
 \subsection{Structure of the paper}

Section~\ref{sec:commensurator} contains an account of the general tools from the theory of lattices in Lie groups which we will need.
Section~\ref{sec:pseudoaction} describes preservation of lines with holes in detail.
Section~\ref{sec:hodge} introduces the notion of a discrete invariant set as it arises from classical and $L^2-$Hodge theory. In the same
section, the commensurator of a form is introduced and the construction of an invariant harmonic form
is carried out. Section~\ref{sec-mainthm} proves
 Theorem~\ref{thm:main-general}.

\bigskip
\noindent {\bf Remarks on notation:}
Throughout this paper, we will use the notation $K$ to denote a subgroup a discrete group. Usually, this will be a normal subgroup
of an arithmetic lattice $\Ga$. In particular, $K$ will generally not denote a maximal compact subgroup of the ambient Lie group $G$.
We will use $N$ to denote a positive integer, as opposed to the more common notation
of the unipotent subgroup in the Iwasawa decomposition of a 
 semi-simple Lie group. The Iwasawa decomposition will  be used briefly
in the proof  of  Claim~\ref{claim1}, but  no  confusion will arise.
We will use the exponentiation shorthand for conjugation
in groups, so that $K^g=g^{-1}Kg$, where  $K$ and $g$ are contained in an ambient group.
The group $G$ will denote an ambient Lie group, which will typically be 
$\{\so(n,1),\su(n,1)\}_{n\geq 2}$
unless otherwise explicitly noted.

\section{Generalities on discrete subgroups of Lie groups}\label{sec:generalities}\label{sec:commensurator}

In this section, we gather some general facts about Zariski dense discrete subgroups of semi-simple Lie groups which we will require
in this article.

\subsection{Zariski dense subgroups and commensurators}
We begin with the following general fact about normalizers of discrete groups. The statement and proof are
 contained as Lemma 2.1 in~\cite{KM18}, and so we omit the  proof.

\begin{lem}\label{lem:normalizer}
	Let $G$ be a simple Lie group and let $\gam\, < \, G$ be a discrete Zariski dense subgroup. Then the normalizer $N_G(\gam)$ is again
	discrete.
\end{lem}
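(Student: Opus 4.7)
The plan is to argue by contradiction and exploit the interaction between the discreteness of $\gam$, its Zariski density, and the fact that $G$, being simple, has only a discrete center.

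First I would assume that $N_G(\gam)$ is not discrete, so that its identity component $H := N_G(\gam)^0$ is a positive-dimensional (closed) connected Lie subgroup of $G$. The point is that $H$ acts by conjugation on $\gam$: for each fixed $\delta \in \gam$, the map
\[
c_\delta \co H \longrightarrow \gam, \qquad h \longmapsto h\delta h^{-1},
\]
is well-defined (since $H \subseteq N_G(\gam)$) and continuous. Its target $\gam$ is discrete and its source $H$ is connected, so $c_\delta$ must be constant, and evaluating at $h = e$ gives $h\delta h^{-1} = \delta$ for every $h \in H$ and every $\delta \in \gam$. Thus $H$ centralizes $\gam$ pointwise.

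Next I would upgrade this to the statement that $H$ is central in $G$. For each $h \in H$, the centralizer $C_G(h) = \{g \in G : gh = hg\}$ is Zariski closed in $G$, and by the previous step it contains $\gam$. Since $\gam$ is Zariski dense, $C_G(h) = G$, so $h \in Z(G)$. Therefore $H \subseteq Z(G)$. But $G$ is simple, so $Z(G)$ is discrete, forcing $H$ to be $0$-dimensional, contradicting the assumption that $\dim H > 0$. Hence $N_G(\gam)$ is discrete.

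I do not expect any serious obstacle here; the proof is essentially a one-line observation once one notes that a continuous map from a connected space to a discrete space is constant. The only thing to be careful about is to use the Zariski-closedness of centralizers (rather than topological closedness) when promoting \emph{centralizes $\gam$} to \emph{centralizes $G$}, since $\gam$ is only Zariski dense, not topologically dense, in $G$.
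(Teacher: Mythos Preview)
Your proof is correct and is the standard argument for this fact. Note that the paper itself omits the proof entirely, citing Lemma~2.1 of~\cite{KM18}, so there is nothing to compare against in the present paper; your argument (connected component of the normalizer must centralize the discrete group, hence lie in the center by Zariski density) is precisely the expected one.
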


The following well--known fact will be used throughout the paper.
\begin{lem}
Let $G$ be a simple real group and let $H<G$ be a Zariski dense subgroup. If $H$ is not discrete then $H$ is dense.
\end{lem}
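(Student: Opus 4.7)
The plan is to show that the analytic closure $\bar H$ of $H$ in $G$ is all of $G$. By Cartan's closed subgroup theorem, $\bar H$ is a Lie subgroup of $G$, so its identity component $\bar H^0$ is a connected Lie subgroup. Since $H$ is not discrete, neither is $\bar H$, and therefore $\bar H^0$ has positive dimension.

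Next I would argue that $\bar H^0$ is normal in $G$. Since $\bar H^0$ is characteristic in $\bar H$ and $H\subseteq \bar H$, the subgroup $H$ is contained in $N_G(\bar H^0)$. Write $\mathfrak h$ for the Lie algebra of $\bar H^0$. Using that $\bar H^0$ is connected and generated by any neighborhood of the identity via $\exp$, an element $g\in G$ normalizes $\bar H^0$ if and only if $\mathrm{Ad}(g)\mathfrak h=\mathfrak h$. The latter is a Zariski closed condition on $g$, as it is cut out by polynomial equations in the matrix coefficients of $\mathrm{Ad}$. Thus the Zariski dense subgroup $H$ is contained in the Zariski closed subgroup $N_G(\bar H^0)$, forcing $N_G(\bar H^0)=G$.

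Finally, simplicity of $G$ means that $\mathfrak g$ has no proper nonzero ideals, so the only connected closed normal subgroups of $G$ are trivial or all of $G$. Since $\bar H^0$ is nontrivial, $\bar H^0=G$, and hence $\bar H=G$, i.e., $H$ is dense. The only mild point requiring care in this outline is the identification of $N_G(\bar H^0)$ with the stabilizer of $\mathfrak h$ under $\mathrm{Ad}$, which rests on the connectedness of $\bar H^0$; everything else is standard.
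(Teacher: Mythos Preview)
Your argument is correct and is essentially the same as the paper's: both show that the identity component $\bar H^0$ of the analytic closure has full Lie algebra by using that $H$ normalizes $\bar H^0$, that the normalizer (equivalently, the $\mathrm{Ad}$-stabilizer of $\mathfrak h$) is Zariski closed, and that $\mathfrak g$ is simple. Your write-up is in fact more explicit than the paper's sketch, which asserts Zariski density of $H^0$ and the tangent-space equality without spelling out the normalizer step.
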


Indeed, since $H$ is not discrete then the topological closure $\overline{H}$ of
$H$ has the property that the component $H^0$ of $\overline{H}$ containing the
identity is a Zariski dense subgroup of $G$ which has positive
dimension, and therefore must be all of $G$; indeed the tangent space to $H^0$ at the identity coincides with the tangent space to $G$,
and so $H^0$ contains a neighborhood of the identity in $G$, which generates the identity component of $G$.
We remark that if $G$ is allowed to be a complex group then one must assume that $H$
is not precompact, as can be seen from the Zariski density of
the unit complex numbers in $\bC$ for instance.

The following Lemma generalizes the corresponding statement in~\cite{KM18} for $\pslr$.

\begin{lemma}\label{lem:power}
	Let $\Ga_0$ be a lattice in a noncompact simple Lie group $G$.
	Let $\gam$ be a subgroup of $G$ containing $\gam_0$ such that   there exists an $N>0$ satisfying the property that
	for all $g\in\gam$, we have $g^N\in \Ga_0$. Then $\gam$ is also discrete.
\end{lemma}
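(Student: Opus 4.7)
The approach is to argue by contradiction: assuming $\gam$ is not discrete, I would produce a sequence of nontrivial elements of $\gam$ accumulating at the identity whose $N$-th powers are forced to equal $e$, and then invoke local injectivity of the exponential map to derive a contradiction. The ingredients are Borel density, the preceding density dichotomy for Zariski dense subgroups of simple real Lie groups, and the standard local structure of the exponential map.

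First I would observe that $\Ga_0$ is a lattice in the connected noncompact simple Lie group $G$, which in particular has no compact factors; the Borel density theorem therefore guarantees that $\Ga_0$ is Zariski dense in $G$. Since $\gam \supseteq \Ga_0$, the overgroup $\gam$ is Zariski dense as well. Suppose now for contradiction that $\gam$ is not discrete. Applying the preceding ``well-known fact'' lemma to the simple real Lie group $G$ and the Zariski dense non-discrete subgroup $\gam$, we conclude that $\gam$ is dense in $G$. In particular, we may choose a sequence $\{g_n\}_{n\geq 1} \subset \gam$ with $g_n \neq e$ and $g_n \to e$. Continuity of the $N$-th power map yields $g_n^N \to e$; the hypothesis gives $g_n^N \in \Ga_0$; and discreteness of $\Ga_0$ then forces $g_n^N = e$ for all sufficiently large $n$.

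To conclude, I would work in the Lie algebra $\mathfrak{g}$ of $G$ via the exponential map $\exp \colon \mathfrak{g} \to G$. Choose a neighborhood $V$ of $0 \in \mathfrak{g}$ small enough that $\exp|_V$ is a diffeomorphism onto its image and such that $N \cdot V \subseteq V$; this can be arranged by shrinking $V$. For $n$ large, write $g_n = \exp(X_n)$ with $X_n \in V$. Then
\[
\exp(N X_n) = g_n^N = e = \exp(0),
\]
and since $N X_n \in V$, the injectivity of $\exp|_V$ gives $N X_n = 0$, whence $X_n = 0$ and $g_n = e$, contradicting the choice $g_n \neq e$.

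The only (minor) obstacle is the arrangement in the last step: one must choose the neighborhood $V$ of $0 \in \mathfrak{g}$ not only so that $\exp|_V$ is a diffeomorphism but also so that multiplication by $N$ preserves $V$, in order to legitimately invoke injectivity on $N X_n$. This is routine. Beyond this, the argument is a direct application of Borel density, the preceding lemma, and the local exponential structure of $G$.
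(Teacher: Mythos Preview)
Your argument is correct in outline and takes a genuinely different route from the paper's. Both proofs begin identically: Zariski density of $\Ga_0$ (hence of $\gam$) plus the discrete-or-dense dichotomy reduces to ruling out density. From there the paper works geometrically on the symmetric space $X$: a lattice has a positive lower bound $\eps$ on the translation lengths of its semi-simple elements, while density of $\gam$ lets one pick a semi-simple $g\in\gam$ of translation length below $\eps/2N$, so that $g^N$ has translation length below $\eps$ and cannot lie in $\Ga_0$. Your approach instead exploits only the Lie group structure: you take $g_n\to e$, force $g_n^N=e$ by discreteness of $\Ga_0$, and then invoke local injectivity of $\exp$ (equivalently, the ``no small subgroups'' property). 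Your argument is more elementary in that it avoids the symmetric space and the systole bound for semi-simple elements; the paper's argument is more geometric and perhaps more in keeping with the ambient rank-one setting of the article.

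One small correction: the condition ``$N\cdot V\subseteq V$'' cannot be arranged for $N>1$ by shrinking $V$ --- scaling a bounded neighborhood of $0$ by $N>1$ makes it larger, not smaller. What you want is to fix a neighborhood $U$ of $0$ on which $\exp$ is a diffeomorphism and then take $V=\tfrac{1}{N}U$, so that $X_n\in V$ implies $NX_n\in U$; injectivity of $\exp|_U$ then gives $NX_n=0$ as desired. With this adjustment the proof goes through.
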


\begin{proof} 
	We have that
	$G$ acts  by isometries on an associated symmetric space $X$ of noncompact type.
	Since $\Ga_0$ is a lattice, there exists $\eps>0$ such that any semi-simple element of $\Ga_0$ has translation length
	on $X$ at least
	$\eps$.
	Since $G$ is simple and $\gam$ is Zariski dense, it follows that $\gam$ is either discrete
	or dense in $G$. We argue by contradiction. If $\gam$ is dense, then since the property of being
	semi-simple is an open condition and since translation lengths of semi-simple elements of $G$ coincide with $\bR_{>0}$,
	there exists a semi-simple element $g \in \gam$ such that the translation length of $g$ is less than $\eps/2N$.
	Hence $g^N$ is a semi-simple element with translation length at most $\eps/2$. In particular,
	$g^N \notin \Ga_0$, which yields a contradiction.
\end{proof}

We remark that Lemma~\ref{lem:power} is false for merely discrete subsets of $G$, since even the square roots of a fixed matrix can
fail to be a discrete set. If $G$ has rank one then one can allow $\Ga_0$ to be a more general subset of $G$.

Let $G$ be a 
semi-simple Lie group and let $\gam<G$ be a subgroup. As usual, we write $\comme_G(\gam)$ to denote
its commensurator in $G$.
We shall need the following special case of a general theorem of Borel \cite[Theorem 2]{borel} (see  \cite[p. 123]{zimmer-book}). This will
be the only real use of arithmeticity of the ambient lattice $\Ga$ in Theorem~\ref{thm:main-general}.
Strictly speaking, the statement of Proposition 6.2.2 in~\cite{zimmer-book} is for the full group of integral points in an ambient group.
The reader will note
however that the only salient feature of the group of integral points which is used is its Zariski density. Thus, we obtain the following conclusion:

\begin{prop}\label{commens=commesn}
	Let $\Ga < G$ be an arithmetic lattice in a  semi-simple algebraic $\bQ$--group and let $K < \Ga$ be
	a Zariski dense subgroup. Then $\comme_G(K)<\comme_G(\Ga)$.
	Suppose furthermore that the center of $G$ is trivial. Then
	$\comme_G(\gam)$ coincides with the $\bQ$--points of $G$.
\end{prop}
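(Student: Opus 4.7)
The plan is to reduce both assertions to the classical theorem of Borel cited in the paragraph preceding the proposition (as applied in the proof of Zimmer's Proposition 6.2.2): namely, that an arithmetic subgroup determines a unique $\bQ$--structure on $G$, and more generally that two arithmetic lattices sharing a Zariski dense subgroup must be commensurable in the honest sense. The trivial center hypothesis will enter only in the second statement, to pin $g$ down inside $G(\bQ)$ itself rather than inside some normalizer of $G(\bQ)$ modulo central ambiguity.

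For the first assertion I would take $g\in\comme_G(K)$. By definition $K\cap K^g$ has finite index in both $K$ and $K^g$, and since $K$ is Zariski dense in $G$ by hypothesis, any finite index subgroup of $K$ is Zariski dense as well; hence $K\cap K^g$ is Zariski dense. This intersection sits inside $\Ga\cap\Ga^g$, so the latter is Zariski dense in $G$ too. Both $\Ga$ and $\Ga^g$ are arithmetic lattices in $G$ (conjugation by $g$ simply transports the ambient $\bQ$--structure). Applying Borel's theorem to the pair $(\Ga,\Ga^g)$, which share the Zariski dense subgroup $\Ga\cap\Ga^g$, I would conclude that the two arithmetic subgroups are commensurable, i.e.\ $g\in\comme_G(\Ga)$. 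This is precisely where the remark in the excerpt—``the only salient feature of the group of integral points which is used is its Zariski density''—is exploited: Zimmer's argument applies verbatim once $K$ is known to be Zariski dense.

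For the second assertion, assume now that $Z(G)$ is trivial. The containment $G(\bQ)\subseteq\comme_G(\Ga)$ is routine: a $\bQ$--rational element preserves the $\bQ$--structure on $G$, so it conjugates $G(\bZ)$ to a commensurable lattice, and hence commensurates any $\Ga$ that is itself commensurable (in the wide sense) with $G(\bZ)$. For the reverse inclusion, given $g\in\comme_G(\Ga)$, the group $\Ga\cap\Ga^g$ has finite index in $\Ga$ and is therefore Zariski dense by the Borel density theorem. Hence $\Ga$ and $\Ga^g$ are two arithmetic lattices sharing a Zariski dense subgroup, so by Borel's uniqueness they determine the same $\bQ$--structure on $G$. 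It follows that conjugation by $g$ preserves this $\bQ$--structure, so $g$ lies in the normalizer of $G(\bQ)$ inside $G$. Since $G$ has trivial center, the adjoint representation is injective and this normalizer coincides with $G(\bQ)$ itself, yielding $g\in G(\bQ)$.

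The single real obstacle in this program is the invocation of Borel's uniqueness theorem for the $\bQ$--structure attached to a Zariski dense arithmetic subgroup; once that is granted, both inclusions are essentially formal. The authors' phrasing signals that the reader should consult Zimmer's proof of Proposition~6.2.2 and observe that Zariski density—rather than the specific form of $G(\bZ)$—is all that the argument consumes.
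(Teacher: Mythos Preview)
The paper does not supply its own proof of this proposition: it is stated as a direct consequence of Borel's theorem (via Zimmer's Proposition~6.2.2), with the sole remark that the argument there needs only Zariski density of the subgroup in question. Your sketch is a faithful unpacking of exactly that citation—reducing both assertions to Borel's uniqueness of the $\bQ$--structure determined by a Zariski dense arithmetic subgroup—so your approach and the paper's are the same.
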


The hypothesis that $G$ has trivial center in the second part of Proposition~\ref{commens=commesn} is crucial. For instance,
the commensurator of $\mathrm{SL}_2(\bZ)$ properly contains $\mathrm{SL}_2(\bQ)$.
The reader will observe that
throughout this paper, we will implicitly assume that $K$ is a Zariski dense subgroup of an arithmetic lattice, though in the statement
of Theorem~\ref{thm:main-general}, we only assume that $K$ is infinite and normal.
This latter assumption implies that $K$ is indeed Zariski dense:

\begin{prop}\label{prop:inf-zd}
Let $K<\Ga$ be an infinite normal subgroup of an irreducible lattice in a 
semi-simple algebraic group $G$. Then $K$ is Zariski dense in $G$.
\end{prop}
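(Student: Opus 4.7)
The plan is to study the Zariski closure $H = \overline{K}^{Zar}$ of $K$ in $G$, use Borel density to show $H$ is a normal algebraic subgroup of $G$, and then leverage the structure of normal subgroups of a semi-simple algebraic group together with the irreducibility of $\Ga$ to force $H = G$.

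First, by the Borel density theorem, $\Ga$ is Zariski dense in $G$ (a lattice in a semi-simple group with no compact factors is Zariski dense). Because $K$ is normal in $\Ga$, the group $\Ga$ normalizes $H$; since the normalizer of an algebraic subgroup is algebraic, it contains the Zariski closure of $\Ga$, which is all of $G$. Hence $H$ is a normal algebraic subgroup of $G$, and in particular its identity component $H^{0}$ is a closed connected normal algebraic subgroup of $G$. Writing $G$ as the almost-direct product of its simple factors $G_{1},\ldots,G_{k}$, we may then express $H^{0}$ as the almost-direct product $\prod_{i \in S} G_{i}$ for some $S \subseteq \{1,\ldots,k\}$.

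If $S = \{1,\ldots,k\}$ then $H^{0} = G$, and since $G$ is connected we conclude $H = G$, which is the desired Zariski density of $K$. Otherwise $S$ is a proper subset; set $G' = H^{0}$ and $G'' = \prod_{i \notin S} G_{i}$, so that $G''$ is nontrivial. Because $H^{0}$ has finite index in $H$ and $K \subset H$, the subgroup $K \cap G'$ has finite index in $K$ and is therefore infinite. On the other hand, $\Ga \cap G'$ is a normal subgroup of $\Ga$ contained in the proper subproduct $G'$, and by irreducibility of $\Ga$ the projection $\Ga \to G''$ has dense image. A classical consequence for irreducible lattices (cf.\ Raghunathan, Zimmer, or Morris) is that the kernel $\Ga \cap G'$ must then be finite, since otherwise its Zariski closure would be a nontrivial algebraic subgroup of $G'$, allowing $\Ga$ to be decomposed, up to finite index, along the factorization $G = G' \cdot G''$. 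This contradicts $K \cap G' \subseteq \Ga \cap G'$ being infinite, ruling out the case $S \subsetneq \{1,\ldots,k\}$.

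I expect the main obstacle to be this last step, namely the finiteness of $\Ga \cap G'$ for an irreducible lattice and a proper subproduct $G'$. In the rank-one setting that is the primary focus of the paper, $G$ is simple and the issue disappears: the only possibilities are $H^{0} = \{e\}$ or $H^{0} = G$, with the former immediately excluded by $K$ being infinite (a zero-dimensional algebraic group is finite). For the general semi-simple statement one invokes the standard no-escape-into-a-factor lemma for irreducible lattices, which follows from the density of projections together with a Zariski-density/commensurability argument of the kind just described.
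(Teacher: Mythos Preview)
Your argument is correct, but it follows a different route from the paper's. The paper proves the proposition dynamically, via limit sets on the Furstenberg boundary: since $K$ is infinite its limit set $\Lambda$ is nonempty, normality of $K$ in $\Ga$ makes $\Lambda$ a closed $\Ga$--invariant set, and by minimality of the $\Ga$--action (Benoist) $\Lambda$ must equal the limit set of $\Ga$; a group contained in a proper algebraic subgroup would have limit set trapped in a proper subvariety of the boundary, contradicting the Zariski density of the limit set of $\Ga$. Your approach is purely algebraic: Borel density plus normality of $K$ forces the Zariski closure $H$ to be normal in $G$, whence $H^0$ is a subproduct of simple factors, and irreducibility of $\Ga$ (via the standard fact that $\Ga\cap G'$ is central, hence finite, for any proper factor $G'$) rules out a proper subproduct. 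In the rank-one case that the paper actually uses, your argument is especially clean, since simplicity of $G$ leaves only $H^0=\{e\}$ or $H^0=G$, and the former is impossible for infinite $K$. The paper's proof ties in with the boundary/limit-set machinery used elsewhere in the article, while yours is closer to the standard lattice-theoretic treatment and avoids invoking Benoist; either is adequate here. Your acknowledged soft spot---finiteness of $\Ga\cap G'$ for a proper subproduct---is indeed the classical ``discrete normal subgroups of a connected Lie group are central'' argument applied after using density of the projection of $\Ga$ into $G'$, and the references you cite cover it.
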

\begin{proof}
Let $\Lambda$ denote the limit set of $K$. Since $K$ is infinite, $\Lambda\neq\varnothing$, since the limit set  consists of the  limit
points of $K$  in the Furstenberg boundary of $G$. Let  $p\in\Lambda$. If $\gamma\in\Ga$  then
$\gamma(p)\in\Lambda$, since  $K$ is normal in $\Ga$. It follows that $\Lambda$ is
a   closed, non-empty $\Ga$--invariant subset of  the Furstenberg boundary.  It therefore  contains all of the  limit set  of  $\Ga$
by the lemma in Section 3.6 of~\cite{benoistgafa}. It  follows that $\Lambda$ is equal  to the limit set of $\Ga$.

Since $\Ga$ is Zariski dense, so is $K$. Else, if $K$  were contained in a proper Lie subgroup $H<G$ then $\Lambda$ would be contained
in the Furstenberg boundary of $H$, which in turn
 is not Zariski dense in the 
Furstenberg boundary of $G$. However the limit set of  $\Ga$ is Zariski  dense  in the boundary: see the remarks  at  the beginning  of
Section 3 of~\cite{benoistgafa}, especially  the lemma in Section 3.6
of that paper. This is a contradiction.
\end{proof}

The following technical fact will be used several times in this paper, and we extract it for modularity.

\begin{lem}\label{lem:comm-discrete0}
Let $K<G$ be a Zariski dense subgroup of a simple algebraic group $G$,
and let \[K^G=\langle\{ K^g \mid g\in\comme_G(K)\}\rangle\]
be the subgroup of $G$ generated by the conjugates of $K$ by $g\in\comme_G(K)$. If $K^G$ is a
discrete subgroup of $G$, then $\comme_G(K)$ is discrete.
\end{lem}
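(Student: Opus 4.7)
The plan is to exhibit $\comme_G(K)$ as a subgroup of the normalizer of $K^G$, and then invoke Lemma~\ref{lem:normalizer} to deduce discreteness.

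First I would observe that $K \subseteq K^G$ (take $g=1\in\comme_G(K)$), so $K^G$ is itself a Zariski dense subgroup of $G$, since Zariski density is inherited upward from any subgroup. By hypothesis, $K^G$ is also discrete; thus Lemma~\ref{lem:normalizer} applies to $K^G$ and guarantees that $N_G(K^G)$ is a discrete subgroup of $G$.

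Next I would check that $\comme_G(K) \subseteq N_G(K^G)$. For this, let $h \in \comme_G(K)$. A typical generator of $K^G$ has the form $g^{-1}kg$ with $g \in \comme_G(K)$ and $k \in K$. Then
\[
h^{-1}(g^{-1}k g)h = (gh)^{-1} k (gh),
\]
and since $\comme_G(K)$ is a group, $gh \in \comme_G(K)$, hence $(gh)^{-1}k(gh) \in K^G$. This shows $h^{-1}K^G h \subseteq K^G$. Applying the same argument with $h^{-1} \in \comme_G(K)$ in place of $h$ yields the reverse inclusion, so $h^{-1}K^G h = K^G$ and $h \in N_G(K^G)$.

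Combining the two steps, $\comme_G(K)$ is a subgroup of the discrete group $N_G(K^G)$, and hence is itself discrete. There is no real obstacle here; the only subtlety is to notice that closure under conjugation by all of $\comme_G(K)$ is built into the definition of $K^G$, which is precisely what makes $K^G$ normalized by $\comme_G(K)$ and lets the simple Lemma~\ref{lem:normalizer} do the work.
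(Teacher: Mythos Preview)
Your proof is correct and takes a genuinely different route from the paper's. You observe that $\comme_G(K)$ normalizes $K^G$ essentially by construction, and then feed $K^G$ (which is discrete by hypothesis and Zariski dense since it contains $K$) directly into Lemma~\ref{lem:normalizer}. This is clean and short.

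The paper instead argues by contradiction without invoking Lemma~\ref{lem:normalizer}: assuming $\comme_G(K)$ is dense, it picks a sequence $g_i\to 1$ in $\comme_G(K)$, fixes a finite subset $\{k_1,\dots,k_m\}\subset K$ generating a Zariski dense subgroup $K_0$, and uses discreteness of $K^G$ to force $k_j^{g_i}=k_j$ for all $j$ once $i$ is large, so that $g_i$ centralizes $K_0$, contradicting simplicity of $G$. What this buys is a slightly sharper statement (which the paper remarks on immediately after the proof): one only needs the \emph{set} $\bigcup_{g\in\comme_G(K)}K_0^g$ to be discrete, for some Zariski dense $K_0<K$, rather than discreteness of the full group $K^G$. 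Your argument trades this extra strength for brevity and a cleaner logical structure.
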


It is a trivial though useful observation that $K^G<\comme_G(K)$.

\begin{proof}[Proof of Lemma~\ref{lem:comm-discrete0}]
We have immediately that $K<\comme_G(K)$, since $K$ normalizes itself. We therefore conclude that
$\comme_G(K)$ is  Zariski dense and  hence is either discrete or dense in $G$.
If $\comme_G(K)$ is dense then there is a sequence $g_i\to 1$ of
nontrivial group elements in
$\comme_G(K)$ converging to the identity. We write $K_i=K^{g_i}$, and we observe that $K_i<K^G$ for each $i$. Choosing finitely
many elements $\{k_1,\ldots,k_m\}\subset K$ which generate a Zariski dense subgroup $K_0<G$,
we have that if $g_i$ is nontrivial then it
cannot fix the entire collection $\{k_1,\ldots,k_m\}$,  since then  $g_i$  would  centralize $K_0$, contradicting Zariski density of $K_0$ and
the simplicity of $G$. However, as $i$ tends to infinity, the conjugation action of $g_i$ on $\{k_1,\ldots,k_m\}$
tends to the identity. Thus, viewing $G$ as a matrix group, we have that
 $\{k_1^{g_i},\ldots,k_m^{g_i}\}$ converges to $\{k_1,\ldots,k_m\}$ in any matrix norm. Since $K_0^{g_i}<K_i<K^G$,  the last of
  which is
 discrete, we have that
 $\{k_1^{g_i},\ldots,k_m^{g_i}\}=\{k_1,\ldots,k_m\}$ element--wise for $i\gg 0$, and hence that $g_i$ commutes with $K_0$  for
 $i\gg 0$. Again using the fact that $K_0$ is Zariski dense and
 $G$ is simple and hence  center--free, we  conclude that $g_i$ is the identity for $i\gg 0$. This is
 a contradiction, whence it follows that $\comme_G(K)$ is discrete.
\end{proof}

The argument in Lemma~\ref{lem:comm-discrete0} even shows that only the set \[\bigcup_{g\in\comme_G(K)} K_0^g\]
need be discrete in order
to conclude the discreteness of $\comme_G(K)$, for  an arbitrary Zariski  dense  subgroup $K_0<K$.

\section{Preservation of lines with holes}\label{sec:pseudoaction}

In this section, we develop some ideas which originate in homological algebra
and which play a central role in this paper, with the goal of producing a criterion for showing
that a particular group element does not commensurate a given subgroup. The historical motivation comes from Chevalley--Weil
theory~\cite{cw,gaschutz},
and which was developed in ~\cite{KM18} under the name of a \emph{pseudo-action}.

Throughout this section,
let $\gam<G$, let $K<\gam$ be a normal subgroup, and let $g\in \comme_G(\Ga)$.
We write $Q=\gam/K$ for the quotient group.
Conjugating by $g\in G$, we obtain groups $K^g<\gam^g$ and a corresponding quotient $Q^g:=\gam^g/K^g$.

For $\gamma \in \Gamma$, we shall refer to the cyclic group $\langle \gamma \rangle$ as a \emph{ $\gamma-$line in $\Gamma$}. Further, any finite index subgroup $\langle \gamma^N \rangle$ of $\langle\gamma\rangle$
	(considered for arbitrary $\gamma \in \Gamma$ and a positive integer $N$) will be referred to as a 
	\emph{ $\Gamma-$line with holes.}  For any $\gamma \in \Gamma$ and $g\in \comme_G(\Ga)$,
	there exists a positive integer $N$, such that $(\gamma^g)^N \in \Gamma$. Hence, for any $\gamma \in \Gamma$, and
	$ g\in \comme_G(\Ga)$, (the conjugation action by) $g$ sends some  $\gamma-$line with holes to a  $\Gamma-$line with holes.

\begin{defn}\label{def-pa}
	The element $g\in \comme_G(\Ga)$ \emph{preserves $Q$--lines with holes} if for all $\gamma \in \Gamma$
	there exists an integer $N>0$ such that
	\[\gamma^n \equiv (\gamma^n)^g  \pmod  K\] for all $n\in N\bZ$. That is, there exists $N > 0$ such that $x_m=[\gamma^{mN},g] \in K$
	for all $m\in\bZ$.
\end{defn}

Thus if $\gamma^N$ and $(\gamma^N)^g$ should both be elements of $\Ga$ (which they are after passing to
multiples of a sufficiently large $N$ since  $g$ commensurates $\Ga$),
then  one can compare their images in $Q=\Ga/K$. If $g$ preserves $Q$--lines with holes
then they must represent the same element of $Q$. 
A special case of Definition \ref{def-pa} is given by the following:

\begin{defn}\label{def-trivialaction-comm-gam} In Definition \ref{def-pa}, if
	we specialize to the case where	$K$ is the commutator subgroup $ [\Ga,\Ga]$ (so that in particular
	$Q=	H_1(\gam,\bZ)$), we say that $g$ \emph{preserves homological lines with holes in $\Gamma$}.
\end{defn}

	

The usefulness of preservation of homological lines with holes will become apparent when one considers its cohomological consequences in Section~\ref{ss:pa2}.
For now, consider the set of all elements $g \in \comme_G(\Ga)$ that preserve $Q$--lines with holes.
It is not  difficult to see  that this subset of $G$ is actually a monoid.
Clearly the  identity lies in this set. Moreover, if $g,h$ preserve $Q$--lines with holes then for all
$\gamma\in\Ga$, there is an $N=N(g,\gamma)$ such  that 
$[\gamma^{N},g] \in K$. Then, $(\gamma^N)^g=\gamma^N\cdot k\in\Gamma$,
so  there is an $M=M(h,\gamma^N\cdot k)$ such that
$[(\gamma^{N}\cdot k)^M,h] \in K$. This shows that \[\gamma^{NM} \equiv (\gamma^{NM})^{gh}  \pmod  K,\] which implies that
the set of elements of $\comme_G(\Ga)$ which preserve $Q$--lines with holes does in fact  form  a monoid.
It is not clear that
inversion
of elements is possible within this set, however. 
We will not require this  monoidal structure in the sequel, though we abstract out the following fact:

\begin{obs}\label{obs:monoid}
	Consider the set $C\subset\comme_G(\gam)$ consisting of elements which preserve $Q$--lines with holes. Then $C$ is closed under
	multiplication of group  elements and contains the identity, and is therefore a monoid. In  particular, if $K_1,K_2\subset C$ are subgroups,
	then the group  \[\langle K_1,K_2\rangle<\comme_G(\gam)\] is contained in $C$.
\end{obs}

The following is the basic result about preservation of $Q$--lines with holes.

\begin{theorem}\label{thm:comm-nonab}
	Let $\Ga < G$, let $K$ be a normal subgroup of $\Ga$, and let $Q = \Ga/K$.
	Suppose that \[g \in \comme_G \Ga \cap \comme_G K.\] Then
	$K^g$ preserves $Q$--lines with holes.
\end{theorem}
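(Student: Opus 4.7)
My plan is to fix an arbitrary element $h \in K^g$ and an arbitrary $\gamma \in \Ga$, and to produce an integer $N$ such that $[\gamma^{mN}, h] \in K$ for every $m \in \bZ$, which is exactly the condition demanded by Definition~\ref{def-pa}. Writing $h = g^{-1} k_0 g$ with $k_0 \in K$ and using that $g$ commensurates $\Ga$ to pick $N_0$ with $\delta := g \gamma^{N_0} g^{-1} \in \Ga$, a direct expansion using $\gamma^{mN_0} = g^{-1} \delta^m g$ yields
\[
[\gamma^{mN_0}, h] \;=\; g^{-1}\,[\delta^m, k_0]\, g.
\]
Because $K \triangleleft \Ga$ and $\delta \in \Ga$, $k_0 \in K$, the factor $[\delta^m, k_0]$ already lies in $K$, so I get $[\gamma^{mN_0}, h] \in K^g$ for free. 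The entire content of the theorem is therefore to upgrade the conclusion from $K^g$ to $K$: I must show that for $m$ in some arithmetic progression, $[\delta^m, k_0]$ additionally lies in $gKg^{-1}$, or equivalently in $L := K \cap gKg^{-1}$.

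This is where the assumption $g \in \comme_G(K)$ enters: it forces $L$ to have finite index in $K$. I will pass from $L$ to its normal core $L_0$ inside $K$, which is still of finite index, and I claim that $\delta$ normalizes $L_0$. Indeed, $\delta \in \Ga$ normalizes $K$, and since $\delta = g \gamma^{N_0} g^{-1}$ lies in $g \Ga g^{-1}$ it also normalizes $g K g^{-1}$; a short intersection argument then shows that the core $L_0$ is preserved. Hence $\delta$--conjugation descends to an honest automorphism $\alpha$ of the finite group $K/L_0$.

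Using the commutator identity $[xy, z] = [x, z]^y [y, z]$, the sequence $c(m) := [\delta^m, k_0]$ satisfies the cocycle relation $c(m_1 + m_2) = c(m_1)^{\delta^{m_2}} c(m_2)$; after reducing modulo $L_0$, this becomes the affine recursion $\bar c(m+1) = \alpha(\bar c(m)) \cdot \bar c(1)$, starting from $\bar c(0) = e$. Equivalently, $\bar c(m) = \phi^m(e)$, where $\phi(x) := \alpha(x)\,\bar c(1)$ is a bijection of the finite set $K/L_0$. Since every permutation of a finite set has finite order, $\phi^M = \mathrm{id}$ for some $M$, whence $\bar c(mM) = e$ for all $m \in \bZ$; in other words $[\delta^{mM}, k_0] \in L_0 \subset g K g^{-1}$.

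Setting $N := M N_0$ therefore yields $[\gamma^n, h] = g^{-1}[\delta^{n/N_0}, k_0] g \in g^{-1}(K \cap g K g^{-1}) g = K^g \cap K \subset K$ for all $n \in N\bZ$, which is exactly the required triviality. The main obstacle in executing this plan is the passage to the normal core: because $L$ itself need not be normal in $K$ one cannot form the quotient group $K/L$ directly, and one must verify carefully that the core $L_0$ (and not merely $L$) is preserved under $\delta$--conjugation — this hinges on $\delta$ lying simultaneously in $\Ga$ and in $g\Ga g^{-1}$, a condition that commensuration of $\Ga$ by $g$ supplies precisely after replacing $\gamma$ by the fixed power $\gamma^{N_0}$.
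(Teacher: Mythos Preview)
Your argument is correct. Both you and the paper begin identically: after conjugating by $g$ (you write $h=g^{-1}k_0g$ and $\gamma^{N_0}=g^{-1}\delta g$, the paper sets $a=(\gamma^N)^z$, $b=\gamma^N$), one observes that the commutators $x_m=[\gamma^{mN},h]$ automatically lie in $K^g$, and the only issue is to force them into $K$ using the finite index $[K^g:K\cap K^g]$.

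The divergence is in how that finite index is exploited. The paper applies pigeonhole directly to the sequence $x_m\in K^g$: two terms $x_s,x_t$ fall into the same right coset of $K\cap K^g$, and a short manipulation using normality of $K$ in $\Ga$ and $a,b\in\Ga$ yields $a^{s-t}\equiv b^{s-t}\pmod K$; the extension to all multiples of $s-t$ is then immediate from normality. Your route is more structural: you replace $L=K\cap gKg^{-1}$ by its normal core $L_0$ in $K$, check that $\delta$ normalizes $L_0$ (which, as you note, uses that $\delta$ lies in both $\Ga$ and $g\Ga g^{-1}$), and then recognise $\bar c(m)=[\delta^m,k_0]\bmod L_0$ as the orbit of the identity under an affine bijection $\phi$ of the finite group $K/L_0$. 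The paper's argument is shorter and avoids the core construction entirely; your argument is a bit heavier but makes the periodicity completely transparent via the finite-order permutation $\phi$, and yields an explicit period $M$ depending only on $|K/L_0|$. The verification that $\delta$ normalises $L_0$ is genuine but routine: since $\delta$ normalises both $K$ and $L$, it permutes the $K$--conjugates of $L$ and hence fixes their intersection.
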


\begin{proof}
	Let $z\in K^g$ and let $\gamma\in\gam$ be
	arbitrary fixed elements. For $N\gg 0$ we have that $\gamma^N\in \gam\cap\gam^g$ and $(\gamma^N)^z\in\gam$.
	Let $a = (\gamma^N)^z$ and $b = \gamma^N$. We have that $a^m , b^m  \in \Ga$ for all $m\in \Z$.
	
	Since $z\in K^g$ and since $K^g$ is
	normal in $\gam^g$, we have that \[a \equiv b \pmod{K^g}.\] Hence, \[a^m \equiv b^m  \pmod{K^g}\] for all $m\in \Z$.
	Thus, the commutators $$x_m := [\gamma^{mN}, z] = a^mb^{-m}$$ have the property that $x_m \in K^g$ for all $m \in \Z$. 
	It is also clear that $x_m\in\gam$ for all $m \in \Z$.

	Since $K$ and $K^g$ are commensurable,
	the collection of elements \[\{x_m=a^mb^{-m}\}_{m\in\bZ}\] has the property that for some
	$s \neq t$, the elements $x_s=a^s b^{-s}$ and $x_t=a^t b^{-t}$ lie in the same
	right coset of $K\cap K^g$ in $K^g$, as follows immediately from the pigeonhole principle.
	
	It follows that there exists an element $k \in K$  such that
	\[k a^s b^{-s} = a^t b^{-t}.\]
	Therefore, we see that
	\[a^{-t}ka^s = b^{s-t},\] which furnishes an element
	$k' \in K$
	such that
	$k'a^{s-t} = b^{s-t}$.
	
	Thus, there exists $M= s-t \neq  0$  such that $a^M \equiv b^M\pmod K$. In particular, $z$ preserves $Q$--lines with holes, the desired
	conclusion.
\end{proof}

In the sequel, we will be interested in specific cases in which $Q$--lines with holes are preserved, and especially the case where
$Q$ is the integral homology of $\Ga/K$.

We now discuss a mild generalization of the notion of preserving homological lines with holes (Definition~\ref{def-trivialaction-comm-gam}). 
Let $Q=\Ga/K$ be
a quotient group.
Clearly, $H_1(Q,\bZ)$ is a quotient of $H_1(\Ga,\bZ)$. 

Let $\gamma\in \Ga$ and let
$g\in\comme_G(\Ga)$.  There  is an integer $N>0$ such that  $\{\gamma^n,(\gamma^g)^n\}\subset\Ga$
for all $n\in N\bZ$. We  can then
compare the homology classes of  $\gamma^n$ and $(\gamma^g)^n$  in $H_1(\Ga,\bZ)$, and  hence in $H_1(Q,\bZ)$. As before,
we say that $g$ \emph{preserves homological lines with holes} in $Q$ if for all
$\gamma\in\Ga$, there exists an integer $N > 0$ such that 
for all $n\in N\bZ$, the homology classes of $\gamma^n$ and $(\gamma^g)^n$ in $H_1(Q,\bZ)$ are equal. 

Let $Q^{ab}$ denote the abelianization of $Q$. Then the condition that $g$ preserves homological lines with holes in $Q$ is equivalent to saying that $g$ preserves $Q^{ab}-$lines with holes in the sense of Definition~\ref{def-pa}.

When $b_1(Q)>0$ then Theorem~\ref{thm:comm-nonab} above furnishes the following commensurability criterion, whose proof
is straightforward now.

\begin{theorem}\label{thm:comm-nonab-b1}
	Let $Q=\gam/K$, let \[g \in \comme_G \Ga \cap \comme_G K,\]. Then $K^g$ preserves homological lines with holes in $Q$.
\end{theorem}

\begin{proof}
Let $Q_0$ be a quotient of $Q$, and let $h \in K^g$. Since $h$ preserves $Q-$lines with holes by Theorem~\ref{thm:comm-nonab},  it also preserves $Q_0-$lines with holes. Specializing to $Q_0=Q^{ab}$ proves the result.
\end{proof}

In particular, when the commensurator of $\gam$ in $G$ contains the commensurator of $K$,
we have that \[K^G=\langle K^g\mid g\in\comme_G(K)\rangle\] preserves homological lines with holes in $Q$.
We remark that in our applications,
$\comme_GK<\comme_G\Ga$ by Proposition~\ref{commens=commesn}.

\section{Homological lines with holes and Hodge theory}\label{sec:hodge}
The goal of this section is to translate between preservation of lines with holes and the existence of commensuration-invariant
harmonic 1-forms. We shall first deduce cohomological consequences of preserving
homological lines with holes.

\subsection{Preserving homological lines with holes and cohomological consequences}\label{ss:pa2}

For the purposes of this subsection, let $G$ denote a semisimple Lie group with no compact factors,
with associated symmetric space of nonpositive
curvature $X$.
Let $\Ga$ be a lattice in $G$ and let $g \in \comme_G (\Ga)$. We write $S = X/\Ga$ and $S^g = X/\Ga^g$.
Since $g \in \comme_G (\Ga)$,  the group $\Ga \cap \Ga^g$ is of finite index in both $\Ga$ and $\Ga^g$.
Let $W= X/(\Ga\cap \Ga^g)$ denote the corresponding common cover of
$S$ and $S^g$.
We shall refer to
$S$ and $S^g$ as \emph{conjugate manifolds}  and $W$ as their \emph{minimal common cover}. Here, $W$ depends on $g$. However, since
 $g$ will be fixed throughout, we will suppress it from the notation.
We will also fix a differential $1$--form $\omega$ on $S$.
Let $p:X \longrightarrow S$ denote the universal covering map. Note that the form $1$--form
$p^\ast \omega$ is a 1-form on $X$. In applications in the sequel, $\omega$ will be a harmonic
form.

The element $g\in G$ is an isometry of $X$ and hence acts on differential forms on $X$ via pullback. The form 
$g^\ast p^\ast \omega$ is a 1-form on $X$ which is invariant under $\Ga^g$
and hence descends to $S^g$. The resulting 1-form on the quotient manifold $S^g$ is denoted
by $\omega^g$.
Let $q: W \longrightarrow S$ and $q^g: W \longrightarrow S^g$ denote the natural covering maps. Denote $q^\ast \omega$ by $\omega_W$
and $(q^g)^\ast \omega^g$ by $\omega_W^g$.

We shall also need to set up notation for $g-$conjugates of cycles and loops, as basepoints will play an important role in what follows.
Let $o \in W$ be a basepoint. By choosing a
lift $\tilde{o} \in X$ and by joining $\tilde{o}$ to $g.\tilde{o}$ by a geodesic segment in $X$ and projecting back to $W$, we 
obtain  a natural geodesic segment $[o,g.o]$ in $W$, where  $g.o$ denotes the image of $g.\tilde{o}$ under the covering projection.
Thus, $g.o$ may be regarded as a new basepoint for integrating chains against a pulled back form.

Now suppose that $\alpha$ is a loop in $W$ representing an element $h \in \pi_1(W)$ such that $h^g$ also
belongs to $\pi_1(W)$  (where here we have identified $\pi_1(W)$ with $\Ga \cap \Ga^g$). 
Lifting $\alpha$ to a path $\tilde{\alpha}$ in $X$,
translating by $g$ and quotienting $X$ by $\Ga \cap \Ga^g$ we obtain a new loop denoted $g.\alpha$ on $W$ based at $g.o$.
Here, we use notation that is similar to the case of a genuine $g-$action on $W$, though the action is well-defined only on the universal
cover $X$. 

The concatenation $[o,g.o] \ast g.\alpha \ast \overline{[o,g.o]}$ gives a loop based at $o$, where
$\overline{[o,g.o]}$ denotes $[o,g.o]$ parametrized in the opposite direction from $g.o$ to $o$.
We denote this loop as $\alpha^g$: $$\alpha^g = [o,g.o] \ast g.\alpha \ast \overline{[o,g.o]}.$$
Finally, for $\sigma$   any  closed, oriented loop on $W$, based at $o$ say, the $n^{th}$ power of the loop $\sigma$ will be the loop
which traverses the loop $\sigma$ a total of $n$ times. The result will be  denoted by $\sigma^n$.

\begin{rmk}\label{rmk-caveat}
	A subtlety in the following lemma needs to be noted.
	On the one hand, the hypothesis is about preserving homological lines with holes in $\Gamma$.
	The conclusion, on the other hand, is about cohomology classes in the common minimal cover $W$.
	The reason for this is that the pullback of $\omega$ to $X$ and its pullback by $g$ are both invariant under
	$\gam\cap\gam^g$, though
	not necessarily by $\gam$ nor $\gam^g$. Thus, $\omega^g_W$ is well-defined as a form on $W$,
	but does not necessarily live in $S$.
\end{rmk}

\begin{lem}\label{trvialcohomolaction}
	Let \[\{\Ga, S, g, S^g, W, \omega_W, \omega_W^g\}\] be as above,
	Suppose that $g$ preserves homological lines with holes in $\gam$. Then
	we have $[\omega_W] = [\omega_W^g]$ as  elements of $H^1(W,\bR)$.
\end{lem}

The importance of Lemma~\ref{trvialcohomolaction} will become apparent in Section \ref{sec:patterns}, particularly Corollary~\ref{cor:hodge-paction}. It follows from the Hodge theorem that  if $\omega_W$ is a harmonic form representing $[\omega_W]\in H^1(W,\bR)$, then $\omega_W=\omega_W^g$ as forms, and not just as cohomology classes.

\begin{proof}[Proof of Lemma~\ref{trvialcohomolaction}]
	We continue with the notation from the discussion before the statement of the lemma.
	Let $\sigma$  be any  closed loop on $W$ based at $o$. Since $g$ commensurates $\gam$, we may choose
	$n>0$ such that  $\sigma^n$ and $(\sigma^n)^g$ are both cycles, and so are viewed as loops based at $o$.
	Observe that if
	$h$ denotes the element of $\pi_1(W,o)$ represented by $\sigma^n$ then the loop $(\sigma^n)^g$ represents
	the group element $h^g\in \pi_1(W,o)$.
	
	Since $g$ is assumed to preserve homological lines with holes in $\gam$, there exists an integer
	$N> 0$ such that $\sigma^N$ and  $ (\sigma^N)^g$  represent the same element of $H_1(S, \bZ)$. Indeed, for any differential
	$1$--form $\omega$ on $S$, we have
	
	\begin{equation}\label{eq1}
		\int_{q(\sigma^N)} \omega = \int_{q((\sigma^N)^g)} \omega=\int_{q(g.\sigma^N)}\omega,
	\end{equation}
	where $q:W \longrightarrow S$ is the covering projection, and where the second inequality holds
	because the integrals of $\omega$ along $[o,g.o]$ and $\overline{[o,g.o]}$ cancel each other.
	Note that the integrals in Equation \eqref{eq1} are over $S$.
	
	Next, by the definition of the pullback form $\omega_W=q^\ast \omega$,
	we have that \[\int_{\sigma^N} \omega_W = \int_{q(\sigma^N)} \omega, 
	\textrm{  and  } \int_{(\sigma^N)^g}\omega_W = \int_{q((\sigma^N)^g)} \omega.\] 
	
	Combining the equations above, we obtain
	
	\begin{equation}\label{eq2}
		\int_{\sigma^N} \omega_W  = \int_{(\sigma^N)^g}\omega_W=\int_{g.\sigma^N}\omega_W,
	\end{equation}
	where all the integrals in Equation \eqref{eq2} are over $W$.

	Finally, we observe
	that by the definition of the pullback $\omega_W^g$, we have
	\begin{equation}\label{eq4}
		\int_{g.(\sigma^N)}\omega_W = \int_{\sigma^N}\omega_W^g,
	\end{equation}
	again using the fact that the integrals of $\omega_W$ along $[o,g.o]$ and $\overline{[o,g.o]}$ cancel each other.
	
	Putting all these equalities together, we obtain
	\begin{equation}\label{eq5}
		\int_{\sigma^N} \omega_W  = \int_{\sigma^N}\omega_W^g.
	\end{equation}
	
	Since \[\int_{\sigma^N} \omega_W=N\int_{\sigma} \omega_W,\] we conclude that
	\begin{equation}\label{eq-eq1}
		\int_{\sigma} \omega_W  = \int_{\sigma}\omega_W^g
	\end{equation}
	for any closed loop $\sigma$ in $W$ based at $o$. The forms $\omega_W$ and $\omega_W^g$ represent
	well-defined elements of $H^1(W, \bR)$, by their very definition. By Equation
	\ref{eq-eq1} above they have the same periods, and since they are both closed differential forms, they are cohomologous. 
\end{proof}

The cohomological consequence of preserving homological lines with holes in quotients is the following (cf.\ Remark \ref{rmk-caveat}):

\begin{lem}\label{lem:Q-form}
	Let $Q=\Ga/K$, let $g\in\comme_G(\Ga)$ preserve homological lines with holes in $Q$, and
	let $\omega\in H^1(Q,\bR)$. Then the periods of $[\omega_W]$ and $[\omega_W^g]$ agree, 
	where $W$ is the common minimal cover of  $S=X/\Ga$ and  its conjugate 
	manifold  $S^g=X/\Ga^g$, and where $\omega_W$ is the pullback of $\omega$ to  $H^1(W,\bR)$.
\end{lem}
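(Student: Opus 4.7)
The plan is to reduce the statement to Lemma \ref{trvialcohomolaction} by inflating $\omega$ from $H^1(Q,\bQ)$ to a class on $S$, and then observing that the periods of the resulting class on $W$ depend only on the image of cycles in $Q$ and not on their full image in $\Ga$. Concretely, I would first use the inflation map $H^1(Q,\bQ)\to H^1(\Ga,\bQ)$ associated to the surjection $\Ga\to Q$ to regard $\omega$ as a cohomology class on $S=X/\Ga$. Under the identification $H^1(\Ga,\bQ)\cong \Hom(\Ga,\bQ)$, the class $\omega$ corresponds to a homomorphism $\varphi\colon\Ga\to\bQ$ which factors through $Q$. With this identification in place, the pullbacks $\omega_W=q^\ast\omega$ and $\omega_W^g=(q^g)^\ast\omega^g$ to $W$ are defined exactly as in the preamble to Lemma \ref{trvialcohomolaction}.

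Next, I would repeat the computation of that lemma. Let $\sigma$ be a closed loop in $W$ based at $o$, representing an element $h\in\pi_1(W)=\Ga\cap\Ga^g$. Choose $n>0$ large enough so that $h^n$ and $(h^g)^n$ both lie in $\Ga\cap\Ga^g$, exactly as in the discussion preceding Lemma \ref{trvialcohomolaction}. The same manipulations that produce Equations \eqref{eq1}--\eqref{eq4} yield
\[
\int_{\sigma^n}\omega_W \;=\; \int_{q(\sigma^n)}\omega, \qquad \int_{\sigma^n}\omega_W^g \;=\; \int_{q(g\cdotp\sigma^n)}\omega,
\]
with both right-hand integrals taken on $S$. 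Since $\omega$ is realised as the homomorphism $\varphi$, these integrals evaluate to $\varphi(h^n)$ and $\varphi((h^n)^g)$ respectively.

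Finally, I would invoke the hypothesis. Triviality of the $g$--pseudo-action on $H_1(Q,\bQ)$ gives, after possibly enlarging $n$ to a suitable multiple $N$, the equality of the images of $h^N$ and $(h^N)^g$ in $H_1(Q,\bQ)$. Since $\varphi$ factors through $Q$, it follows that $\varphi(h^N)=\varphi((h^N)^g)$, whence $\int_{\sigma^N}\omega_W=\int_{\sigma^N}\omega_W^g$ and therefore $\int_\sigma\omega_W=\int_\sigma\omega_W^g$. As $\sigma$ was an arbitrary closed loop in $W$, equality of all periods on $W$ gives $\omega_W=\omega_W^g$ in $H^1(W,\bQ)$, as required.

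I do not anticipate a serious obstacle in carrying this out; the argument is essentially a formal refinement of Lemma \ref{trvialcohomolaction}. The one subtle point to be articulated carefully is that the weaker hypothesis (triviality of the pseudo-action on $H_1(Q,\bQ)$, rather than on $H_1(\Ga,\bQ)$) is precisely what is needed, because inflated classes only detect the $Q$--homology class of a cycle. In particular, one does \emph{not} need triviality of the pseudo-action on $H_1(\Ga,\bQ)$, which in general will fail.
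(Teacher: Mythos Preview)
Your proposal is correct and follows essentially the same route as the paper: inflate $\omega$ from $H^1(Q,\bQ)$ to $H^1(\Ga,\bQ)=H^1(S,\bQ)$ via the quotient map, observe that its periods factor through $H_1(Q,\bQ)$ (the paper writes this as $\int_\sigma q^\ast\omega=\omega(q_\ast\sigma)$, you write it via the homomorphism $\varphi$ factoring through $Q$), and then rerun the period computation of Lemma~\ref{trvialcohomolaction} using only the weaker hypothesis on $H_1(Q,\bQ)$. Your closing remark about why triviality on $H_1(Q,\bQ)$ suffices is exactly the point the paper is making implicitly.
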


\begin{proof}
	Let $\omega\in H^1(Q,\bR)$ be a nontrivial cohomology class. Then the quotient map $q\colon \Ga\longrightarrow Q$ induces a pullback form
	$q^*\omega\in H^1(\gam,\bR)$, which can be  viewed as  a differential form on $S=X/\gam$.
	The map $q$ also induces a map $q_*\colon H_1(\Ga,\bZ)\longrightarrow H_1(Q,\bZ)$.
	If $\sigma$ is any $1$--cycle on $X/\Ga$ then by definition \[\int_{\sigma} q^*\omega=
	\omega(q_*\sigma),\] where the right hand side denotes the  evaluation of $\omega$ on  $q_*(\sigma)$ (recall $\omega$ is a cohomology
	class of $Q$).
	Writing $\omega_W$ for
	the form on $W$ given by pullback of $q^*\omega$ along the covering map $p\colon W\longrightarrow S$,
	we have that $\omega_W^g$ and $\omega_W$ have the  same periods, provided that $g$
	preserves homological lines with holes in $Q$. A justification of this claim is
	identical to that in the proof of Lemma~\ref{trvialcohomolaction}.
\end{proof}

We note the following easy observation (cf.~Observation~\ref{obs:monoid} above).

\begin{obs}\label{obs:monoid-hom}
	Consider the set $C\subset\comme_G(\gam)$ consisting of elements which preserve homological lines with holes in
	$Q$. Then $C$ is closed under
	multiplication of group  elements and contains the identity, and is therefore a monoid. In  particular, if $K_1,K_2\subset C$ are subgroups,
	then the group  $\langle K_1,K_2\rangle$ is contained in $C$.
\end{obs}

\subsection{Hodge theory}\label{sec:patterns}
Hodge theory will allow us to leverage preservation of homological lines with holes in order
to promote equality of cohomology
classes to  equality of forms.
We recall the necessary tools from Hodge theory and $L^2-$cohomology that we shall need. Let $M$ be a 
(not necessarily compact)  Riemannian manifold.
We fix notation: $\Omega^k$ will denote the space of smooth $k-$forms, $d$ will denote the differential on forms,
$\ast$ will denote the Hodge star operator, $d^\ast$ will denote the adjoint of $d$, and 
$\Delta=dd^\ast + d^\ast d$ will denote the Laplacian on forms. A form $\omega\in \Omega^k$ is a  \emph{ harmonic $k-$form} for the given
metric on $M$ if $\Delta \omega = 0$. Harmonic forms are closed and co-closed.

\begin{theorem}\cite[Ch. 6]{warner-book}\label{theorem-hodge}
	Let $M$ be a compact Riemannian manifold. Then for all $k$ and every real cohomology class $[\omega] \in H^k(M,\Re)$,
	there exists a unique harmonic form $\omega_{\harm}$ representing $[\omega]$.
\end{theorem}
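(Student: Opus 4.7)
The plan is to reduce existence and uniqueness to the Hodge decomposition theorem for the compact Riemannian manifold $M$. First, I would equip $\Omega^k(M)$ with the $L^2$ inner product $\langle \alpha,\beta\rangle = \int_M \alpha \wedge \ast\beta$ induced by the metric and the Hodge star, and verify on a closed manifold (no boundary terms) that $d^\ast$ is the formal adjoint of $d$. Consequently $\Delta = dd^\ast + d^\ast d$ is a formally self-adjoint, non-negative second-order operator, and a direct integration by parts identifies $\omega \in \ker \Delta$ with forms that are simultaneously closed and co-closed.

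The central step would be to establish the orthogonal Hodge decomposition
\[
\Omega^k(M) = \mathcal{H}^k \oplus d(\Omega^{k-1}(M)) \oplus d^\ast(\Omega^{k+1}(M)),
\]
where $\mathcal{H}^k = \ker\Delta$. This is the analytically hard part and relies on the fact that $\Delta$ is elliptic of order $2$: pass to the Sobolev completion $H^s(M, \Lambda^k)$, apply G\aa rding's inequality together with Rellich--Kondrachov compactness to deduce that $\mathcal{H}^k$ is finite-dimensional and that $\Delta$ has closed range, then return to $C^\infty$ forms via elliptic regularity. I expect this to be the only genuine obstacle; everything else is formal once the decomposition is in hand.

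Granting the decomposition, existence is immediate: given a closed form $\omega$ with $[\omega] \in H^k(M,\mathbb{R})$, write $\omega = \omega_{\harm} + d\alpha + d^\ast\beta$ with the three summands mutually orthogonal; applying $d$ yields $0 = dd^\ast\beta$, hence $\langle d^\ast\beta, d^\ast\beta\rangle = \langle dd^\ast\beta,\beta\rangle = 0$, so $d^\ast\beta = 0$ and $\omega - \omega_{\harm} = d\alpha$ is exact. Hence $\omega_{\harm}$ represents $[\omega]$.

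For uniqueness, suppose $\omega_1$ and $\omega_2$ are harmonic forms with $\omega_1 - \omega_2 = d\alpha$ exact. Since harmonic forms are co-closed, $d^\ast(\omega_1 - \omega_2) = 0$, and therefore
\[
\|\omega_1 - \omega_2\|_{L^2}^2 = \langle d\alpha, \omega_1 - \omega_2\rangle = \langle \alpha, d^\ast(\omega_1 - \omega_2)\rangle = 0,
\]
so $\omega_1 = \omega_2$. Thus the map $\mathcal{H}^k \to H^k(M,\mathbb{R})$ sending $\omega_{\harm}$ to $[\omega_{\harm}]$ is a bijection, which is the content of the theorem.
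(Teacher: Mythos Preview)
Your sketch is the standard route to the Hodge theorem and is correct at the level of detail given. Note, however, that the paper does not supply its own proof of this statement: it is quoted as a classical result with a citation to \cite[Ch.~6]{warner-book}, and the argument you outline (Hodge decomposition via ellipticity of $\Delta$, then the formal existence/uniqueness consequences) is precisely the one developed in that reference.
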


We shall need a version of Theorem \ref{theorem-hodge} for non-compact complete manifolds $M$. The appropriate cohomology 
theory used is $L^2-$cohomology. Let $\Omega^k_2$  denote the space of smooth square-integrable $k-$forms. The reduced 
$L^2-$cohomology groups are given by \[H^k_{(2)} (M) = \left. ker(d)\middle/\overline {Im(d)}\right.,\] where $\overline {Im(d)}$ 
denotes the closure of the image of
$d$. We refer the reader to \cite{carron} for more details. We shall need only the following special case 
(see  \cite[Lemma 1.5]{carron} due to Gaffney, or  \cite{cheeger} for instance):

\begin{theorem}\label{theorem-hodgel2}
	Let $M$ be a complete negatively curved manifold of finite volume modeled on $\bH^n$ or $\bC \bH^n$.
	Then for  every real cohomology class $[\omega] \in H^1_{(2)}(M, \Re)$, there exists a unique $L^2$
	harmonic form $\omega_{\harm}$ representing  $[\omega]$.
\end{theorem}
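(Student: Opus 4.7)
The plan is to deduce Theorem~\ref{theorem-hodgel2} from the classical Hodge--Kodaira decomposition for complete Riemannian manifolds, due originally to Gaffney, as cited in the paper. The key observation is that any finite volume negatively curved manifold $M$ modeled on $\bH^n$ or $\bC\bH^n$ is geodesically complete (this is automatic from the local model, since such $M$ arises as a quotient of a complete simply connected model space by a discrete isometric action), so the only ingredient needed is a Hodge-type theorem for complete Riemannian manifolds, with no compactness assumption.

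First, I would invoke Gaffney's theorem in the form stated in \cite[Lemma 1.5]{carron}: for a complete Riemannian manifold $M$, the space $L^2\Omega^k(M)$ admits the orthogonal direct sum decomposition
\[
L^2\Omega^k(M) \;=\; \HH^k(M) \;\oplus\; \overline{d\,C_c^\infty \Omega^{k-1}(M)} \;\oplus\; \overline{d^\ast C_c^\infty \Omega^{k+1}(M)},
\]
where $\HH^k(M)$ denotes the space of $L^2$ harmonic $k$--forms (i.e.\ smooth, closed, co-closed, and square integrable), and the closures are taken in the $L^2$ norm on forms. The existence of this orthogonal decomposition is precisely the substantive analytic input, and is the source cited in the statement of the theorem.

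Next, I would use this decomposition to identify reduced $L^2$--cohomology with harmonic forms in degree one. A closed $L^2$ form $\omega$ is orthogonal to $\overline{d^\ast C_c^\infty \Omega^{2}(M)}$ by a straightforward integration-by-parts argument (valid because $M$ is complete, which is again where Gaffney's work is needed to justify the absence of boundary terms at infinity). Therefore the Hodge projection of $\omega$ onto the first summand yields a harmonic form $\omega_{\harm}\in\HH^1(M)$ with $\omega-\omega_{\harm}\in \overline{d\,C_c^\infty\Omega^0(M)}$. In particular $[\omega_{\harm}]=[\omega]$ in $H^1_{(2)}(M,\Re)=\ker(d)/\overline{\mathrm{Im}(d)}$, giving existence of a harmonic representative.

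For uniqueness, if $\omega_{\harm}'\in\HH^1(M)$ is another harmonic representative of $[\omega]$, then $\omega_{\harm}-\omega_{\harm}'$ lies in $\HH^1(M)\cap\overline{d\,C_c^\infty\Omega^0(M)}$, which is the zero subspace by the orthogonality in the Hodge--Kodaira decomposition. Hence $\omega_{\harm}=\omega_{\harm}'$. The main (and really the only) obstacle is justifying Gaffney's orthogonal decomposition in this non-compact setting; but since we only invoke completeness, and this is already covered by the references \cite{carron,cheeger} cited in the statement, no further work is required beyond pointing out that the finite volume and pinched negative curvature hypotheses ensure completeness and thus make the general theorem directly applicable.
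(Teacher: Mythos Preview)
Your proposal is correct and is exactly the approach indicated by the paper. The paper does not give its own proof of this statement; it simply cites Gaffney's result (via \cite[Lemma 1.5]{carron}, and \cite{cheeger}) as the source, and your argument is precisely the standard unpacking of that reference: the $L^2$ Hodge--Kodaira decomposition on a complete manifold identifies reduced $L^2$--cohomology with the space of $L^2$ harmonic forms, giving both existence and uniqueness of the harmonic representative.

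One minor remark: the orthogonality of a closed $L^2$ form $\omega$ to $d^\ast\eta$ for compactly supported $\eta$ does not itself require completeness or Gaffney's argument, since $\eta$ has compact support and the integration by parts is local. Completeness enters in establishing the orthogonal decomposition itself (essential self-adjointness of $d+d^\ast$), which you correctly attribute to Gaffney.
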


Note that a compactly supported cohomology class is an $L^2$ class.
Thus in  our  context, if  $X/\Ga$ has nontrivial real cohomology with compact supports then we can find nontrivial $L^2$ harmonic
forms representing  such cohomology classes. In our analysis of the case $b_1(Q)>0$ for groups  arising as quotients of
non-uniform lattices $\Ga$, the absence of a nonzero $L^2$ harmonic $1$--form
will (roughly) allow us to assume  that $H_c^1(X/\Ga,\bR)=0$.
See the  proof of  Theorem~\ref{thm:q-b1} below.


We recall the setup of Lemma~\ref{trvialcohomolaction}
in a slightly restricted setting: we are  given a lattice $\Ga$ in $G\in\{\so(n,1),\su(n,1)\}_{n\geq 2}$ with
associated symmetric space of noncompact type $X$, and an element  $g\in G$ commensurating $\Ga$. We have an orbifold $S=X/\gam$,
 the conjugate manifold $S^g=X/\Ga^g$, the common refinement $W=X/(\Ga\cap\Ga^g)$ and a cohomology class  $\omega\in H^1(S,\bR)$.
We assume the existence of a (possibly $L^2$) harmonic representative $\omega_{\harm}$ of $\omega$,
whose uniqueness is then guaranteed by Theorems~\ref{theorem-hodge}
 and~\ref{theorem-hodgel2}. Note that such a harmonic representative may not exist only in the case where $S$ is noncompact.
 
 We will also call the resulting harmonic form $\omega$  as it will not cause confusion.
 Recall the  notation \[p:X \to S,\quad W,\quad \omega_W,\quad  \omega_W^g\] from Section \ref{ss:pa2}.
For convenience, we will denote $p^*\omega$ by $\omega_X$ and $g^*\omega_X$ by  
$\omega_X^g$, where $g^*$ is the action on 1-forms induced by 
 the  isometry $g$ of $X$.
 
 \begin{cor}\label{cor:hodge-paction}
Assume the above setup, and
 suppose that $g$ preserves homological lines with holes in $\gam$. Then the harmonic representatives of $\omega_W$ and
 $\omega_W^g$ are equal as differential
 1-forms on $W$. In particular, the harmonic representatives of
 $\omega_X$ and $\omega_X^g$ are equal.
\end{cor}
\begin{proof}
Since $g$ acts on $X$ by  an isometry, the pullback of a harmonic  form  under $g$ is  also harmonic
(see Section 4 of~\cite{EL78}, for example).
Thus, $\omega_W^g$ is a
form on  $W$ which is  cohomologous to
the form $\omega_W$, by Lemma~\ref{trvialcohomolaction}.
Since $\Ga\cap\Ga^g$ has finite index in $\Ga$, we have that $W$  still has finite
volume and hence the suitable Hodge  theorem (Theorem \ref{theorem-hodge}  or \ref{theorem-hodgel2}) applies, whence the harmonic
representatives of $\omega_W$  and $\omega_W^g$ are equal. The equality
of forms on $X$  is immediate.
\end{proof}

A  part of the remainder of the paper will deal with the case where there is no  harmonic form representing a nontrivial homology
class, which is to say a complement to Corollary~\ref{cor:hodge-paction} adapted to cusped orbifolds.

\subsection{The commensurator of a form}

The notion of the commensurator of a form will now be introduced. It will be shown that under suitable hypotheses,
$K^G$ lies in the commensurator
of a harmonic form, as is forced by preservation of homological lines with forms. The rigid nature
of the harmonic form will force it to be zero whenever $K^G$ fails to be discrete, which only occurs if $\comme_G(K)$ is dense.
As  before, cohomology with compact supports will be denoted by $H^\ast_c(\, \cdot \,)$.

\begin{defn}\label{def-formpres} Let $\Ga < G$ be a lattice in a semi-simple Lie group $G$ with associated symmetric space $X$, and let $S = X/\Ga$. Let $\omega$ be a closed form such that
	$[\omega] \in H^p(S,\bQ)$ or $[\omega] \in H^p_c(S,\bQ)$ is a non-zero cohomology class. 
	Let $p : X \longrightarrow S$ denote the universal cover.
	The \emph{commensurator} $\comme(\omega)$ of the form $\omega$ is defined as 
	\[\comme(\omega) = \{ h \in G \mid h^\ast p^\ast \omega = p^\ast \omega \}.\]
	A subgroup $H$ of $G$ is said to commensurate $\omega$ if $H < 
	\comme(\omega).$ It is immediate the $\comme(\omega)$ is itself a group.
\end{defn}

We have the  following general  discreteness result that applies to the isometries of real and complex hyperbolic spaces.
We will not  consider isometries of quaternionic hyperbolic spaces or the Cayley plane (see the remarks following
Theorem~\ref{thm:main-general}). We direct the reader to
~\cite{venky,agol-form}, from which the main idea used in the following Proposition is taken:
\begin{prop}\label{prop-generalG}
	Let $X$ be $\bH^n$ or $\bC \bH^n$. For $\Ga$  a torsion-free lattice, let  $S = X/\Ga$. Let $\omega$ be
	a non-zero harmonic or $L^2-$harmonic 1-form according to whether $S$ is compact or non-compact.  Then
	$\comme(\omega)$ is discrete.
\end{prop}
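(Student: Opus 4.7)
The plan is to combine a dichotomy for Zariski dense subgroups of $G$ with the rigidity of $G$-invariant $1$-forms on a rank one symmetric space. First I would check that $\Gamma\subset\comme(\omega)$. This is immediate: since $\omega$ is defined on $S=X/\Gamma$, the pullback $p^{*}\omega$ is tautologically $\Gamma$-invariant. By Borel density, $\Gamma$ is Zariski dense in $G$, and hence so is $\comme(\omega)$.

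Next I would verify that $\comme(\omega)$ is a closed subgroup of $G$. Given a convergent sequence $h_{n}\to h$ in $G$ with each $h_{n}\in\comme(\omega)$, continuity of the $G$-action on $TX$ shows that
\[
(p^{*}\omega)_{h_{n}x}(dh_{n}\,v)\longrightarrow (p^{*}\omega)_{hx}(dh\,v)
\]
for every $x\in X$ and $v\in T_{x}X$, so $h^{*}p^{*}\omega=p^{*}\omega$ in the limit. Combining closedness and Zariski density with the dichotomy recalled in Section \ref{sec:commensurator} --- a Zariski dense subgroup of the simple Lie group $G$ is either discrete or dense --- we see that if $\comme(\omega)$ fails to be discrete, then it is dense, and so by closedness it equals all of $G$. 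In this case $p^{*}\omega$ is a $G$-invariant $1$-form on $X$.

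The main obstacle is then to rule out the existence of a nonzero $G$-invariant $1$-form on $X$. Writing $X=G/K$ with $K$ a maximal compact subgroup and using the Cartan decomposition $\mathfrak{g}=\mathfrak{k}\oplus\mathfrak{p}$, such a form is determined by its value at the origin, which must be a $K$-fixed element of $T_{eK}^{*}X\cong\mathfrak{p}^{*}$. For $G\in\{\so(n,1),\su(n,1)\}_{n\geq 2}$, the isotropy representation of $K$ on $\mathfrak{p}$ acts transitively on unit vectors, so the only $K$-invariant element of $\mathfrak{p}^{*}$ is $0$. Hence $p^{*}\omega$ vanishes identically on $X$, contradicting $\omega\neq 0$ on $S$. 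This last rigidity step is the Lie-theoretic input attributed to Venkataramana and Agol, and it is the only place where the restriction $G\in\{\so(n,1),\su(n,1)\}$ is essential; everything else goes through for any simple rank one $G$.
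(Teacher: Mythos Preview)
Your argument is correct and follows the same overall strategy as the paper: show that $\comme(\omega)$ contains the Zariski dense lattice $\Gamma$, invoke the discrete-or-dense dichotomy for Zariski dense subgroups of a simple group, and then rule out the existence of a nonzero $G$-invariant $1$-form on $X$. You are in fact more careful than the paper in one respect, namely in explicitly verifying that $\comme(\omega)$ is closed; the paper uses this implicitly when it passes from ``$\comme(\omega)$ is dense and preserves $p^{*}\omega$'' to ``$G$ preserves $p^{*}\omega$''.

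The only substantive difference lies in the final rigidity step. The paper transports the $G$-invariant harmonic form to the compact dual $S^{n}$ or $\bC\mathbb{P}^{n}$ and then appeals to the vanishing of $H^{1}$ there; this is the Venkataramana--Agol device referenced in the text. You instead argue directly at the Lie algebra level: a $G$-invariant $1$-form is determined by a $K$-fixed vector in $\mathfrak{p}^{*}$, and since $K$ acts transitively on the unit sphere in $\mathfrak{p}$ for $G\in\{\so(n,1),\su(n,1)\}_{n\geq 2}$, one has $(\mathfrak{p}^{*})^{K}=0$. Both arguments ultimately compute the same invariant space, but yours is more elementary and self-contained, while the paper's compact-dual formulation has the virtue of generalizing at once to forms of higher degree via the known cohomology of $G_u/K$.
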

\begin{proof} Let $p: X \longrightarrow S$ denote the universal cover.
	We now argue by contradiction. Suppose that $\comme(\omega)$ is not discrete. 
	Since the associated  Lie group $G$ (i.e. $SO(n,1)$ or $SU(n,1)$) is simple, it follows that $\comme(\omega)$ is dense in $G$, as
	$\comme(\omega)$ contains the Zariski dense subgroup $\Gamma$.
	Also, since $\comme(\omega)$ preserves $p^\ast (\omega)$, we have that $G$ must preserve $p^\ast (\omega)$, since $G$ is
	identified with the group of isometries of $X$. That is, $p^\ast (\omega)$ is a $G-$invariant non-zero 
	harmonic 1-form on
	$X$. (Note that here, compactness or non-compactness of $S$
	is not relevant, as $p^\ast (\omega)$ being defined on $X$ is all that we
	are concerned with at this stage).
	Hence $p^\ast (\omega)$ gives 
	a non-zero harmonic differential 1-form $\omega^\ast$ on the compact dual of $\bH^n$ or $\bC \bH^n$~\cite{venky,agol-form} (cf.~
	Sections 2 and 3 of Chapter II in~\cite{bo-wa}).
	Since the compact duals  $S^n$ and  $\bC\mathbb{P}^n$ of  $\bH^n$ and $\bC \bH^n$  respectively have trivial
	first cohomology (at least when $n\geq 2$), this is a contradiction.
\end{proof}

From Lemma~\ref{lem:Q-form}, we obtain the following consequence:

\begin{cor}\label{cor:Q-form}
Suppose $\Ga$ is torsion-free.
Let $Q=\Ga/K$,   and let $C\subset\comme_G(\gam)$ denote the set of elements which
preserve homological lines with holes in $Q$.
If there exists a (possibly $L^2)$ harmonic form on $S=X/\Ga$ representing a  pullback  of a nonzero cohomology class
of  $Q$, then $C$ is discrete.
\end{cor}
\begin{proof}
Let $\omega$ be the harmonic representative of  a form on  $S$ arising by pullback from $Q$, and let $g\in C$. Then 
by Lemma~\ref{lem:Q-form} and Corollary~\ref{cor:hodge-paction},
we have that $\omega_W=\omega_W^g$ as forms, by either classical or $L^2$--Hodge theory, and where 
here $W$ is  the common refinement of $S$ and its
conjugate $S^g$. Pulling back  these forms to the universal cover  $X$, we  have  that $g\in\comme(\omega)$. By
Proposition~\ref{prop-generalG}, we conclude that $C$ is discrete.
\end{proof}

\section{Abelian quotients and harmonic 1-forms}\label{sec-mainthm}
We are now in a position to assemble the pieces to prove Theorem~\ref{thm:main-general}.
The ideas to establish  the result naturally bifurcate:
\begin{enumerate}
\item The vanishing cuspidal case, amenable to $L^2-$cohomology techniques. For $\pslr$,
this is the case where the underlying hyperbolic surface has genus greater than zero. This part of the argument uses Hodge theory.
\item The non-vanishing cuspidal case, where discrete patterns of horoballs are used to obtain discreteness of the commensurator (See Appendix \ref{sec-app}).
For $\pslr$, this is the case where the underlying hyperbolic surface has genus equal to zero, and compactly supported
cohomology vanishes. This
part of the argument borrows heavily from  the ideas in~\cite{KM18}.
\end{enumerate}

\subsection{Proof of Theorem~\ref{thm:main-general}}

We now establish part of the main result of this paper:

\begin{theorem}\label{thm:q-b1}
Let $\Ga<G$ be a lattice in a rank one simple Lie group.
Let $K<\Ga$ be an infinite normal subgroup, and let $Q=\Ga/K$. If the first Betti number of $Q$ satisfies
$b_1(Q)>0$ then $\comme_G(K)$ is discrete.
\end{theorem}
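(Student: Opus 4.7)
The strategy is to show that every $g \in \comme_G(K)$ lies in a discrete subgroup of $G$ by combining the pseudo-action machinery of Section~\ref{sec:pseudoaction} with the Hodge-theoretic and horoball-pattern tools of Section~\ref{sec:hodge}. The argument splits into a reduction, a Hodge-theoretic case, and a purely cuspidal case.

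\emph{Stage 1 (Reduction to $K^G$).} Since $K$ is an infinite normal subgroup of the irreducible lattice $\Ga$, Proposition~\ref{prop:inf-zd} gives that $K$ is Zariski dense in $G$, so Proposition~\ref{commens=commesn} yields $\comme_G(K) \subseteq \comme_G(\Ga)$. Hence every $g \in \comme_G(K)$ meets the hypotheses of Theorem~\ref{thm:comm-nonab-b1}, so $K^g$ pseudo-acts trivially on $H_1(Q,\bQ)$. By Observation~\ref{obs:monoid-hom}, the group $K^G = \langle K^g \mid g \in \comme_G(K)\rangle$ pseudo-acts trivially on $H_1(Q,\bQ)$. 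By Lemma~\ref{lem:comm-discrete0}, it suffices to prove that $K^G$ itself is discrete.

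\emph{Stage 2 (Harmonic representative available).} Using $b_1(Q) > 0$, pick a nonzero class $[\eta] \in H^1(Q,\bQ)$ and pull it back along $\Ga \twoheadrightarrow Q$ to a class $[\omega] \in H^1(S,\bQ)$, where $S = X/\Ga$. If $\Ga$ is uniform, Theorem~\ref{theorem-hodge} produces a nonzero harmonic representative of $[\omega]$. If $\Ga$ is non-uniform but $[\omega]$ lies in the image of $H^1_c(S,\bR) \to H^1(S,\bR)$, then Lemma~\ref{lem:lueck} places it in reduced $L^2$-cohomology and Theorem~\ref{theorem-hodgel2} produces a nonzero $L^2$-harmonic representative. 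In either case Corollary~\ref{cor:Q-form} implies that the monoid of elements of $\comme_G(\Ga)$ pseudo-acting trivially on $H_1(Q,\bQ)$ is discrete; since $K^G$ is contained in this monoid, $K^G$ is discrete.

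\emph{Stage 3 (Purely cuspidal case).} The remaining subcase is when $\Ga$ is non-uniform and no pullback of a class from $H^1(Q,\bQ)$ is represented by an $L^2$-harmonic form on $S$, so that $[\eta]$ is ``detected only at the cusps''. The goal is to show that $K^G$ preserves the standard $\Ga$-invariant discrete pattern $\SSS$ of horoballs around the cusps of $S$, whence Lemma~\ref{lem-discretepathoro} forces $K^G$ to be a discrete finite-index supergroup of $\Ga$. Pick a parabolic $\gamma \in \Ga$ whose fixed point $\xi \in \partial X$ represents a cusp where $\eta$ has nontrivial periods. For $h \in K^G$, triviality of the pseudo-action gives $N \gg 0$ with $\gamma^N \equiv (\gamma^N)^h \pmod{K}$, so their images in $Q$ agree. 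Using the Iwasawa decomposition $G = NAK$ at $\xi$ to normalize the unipotent stabilizer of $\xi$, one argues that this congruence constrains the parabolic fixed point of $(\gamma^N)^h$ to lie in the $\Ga$-orbit of $\xi$, and hence that $h$ carries cusps of $\Ga$ to cusps of $\Ga^h$ compatibly with $\SSS$. Running this over representatives of all cusp classes yields $h(\SSS) \subseteq \SSS$, which completes the argument.

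\emph{Main obstacle.} The crux is Stage 3: converting an algebraic/homological identity (trivial pseudo-action on $Q$) into the geometric assertion that $h$ sends horoballs to horoballs. The mere equality of $\gamma^N$ and $(\gamma^N)^h$ modulo $K$ does not formally pin the parabolic fixed point of $(\gamma^N)^h$ to the $\Ga$-orbit of $\xi$, and ruling out ``drift'' of this fixed point away from the original cusp set is precisely where Iwasawa decomposition-based analysis (the content of Claim~\ref{claim1}) enters. Once pattern preservation is established, Lemma~\ref{lem-discretepathoro} closes the argument essentially for free.
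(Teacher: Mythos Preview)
Your Stages~1 and~2 match the paper's argument closely; the only omission is the preliminary reduction to torsion-free $\Gamma$ (by passing to a finite-index subgroup and intersecting $K$ accordingly), which is needed so that $S=X/\Gamma$ is a genuine manifold and Corollary~\ref{cor:Q-form} applies.

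Stage~3, however, does not match the paper and contains a genuine gap. You aim to show directly that every $h\in K^G$ preserves the full horoball pattern $\SSS$ and then invoke Lemma~\ref{lem-discretepathoro}. Two problems arise. First, triviality of the pseudo-action on $H_1(Q,\bQ)$ only constrains the cusps $T_1,\dots,T_k$ whose homology maps nontrivially to $H_1(Q,\bQ)$; for the remaining cusps you have no control at all, so you cannot conclude $h(\SSS)\subseteq\SSS$. Second, even at a good cusp $T_j$, the congruence $\gamma^N\equiv(\gamma^N)^h\pmod K$ only tells you that $(\gamma^N)^h$ is a parabolic of $\Gamma$ with nonzero $H_1(Q,\bQ)$-class; its fixed point lies in the $\Gamma$-orbit of \emph{some} $x_\ell$, not necessarily of $\xi=x_j$. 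So $h$ may permute the good cusps, and your assertion that the fixed point lies in the $\Gamma$-orbit of $\xi$ is too strong. Moreover, even granting that $h$ sends good cusp points to good cusp points, nothing in your sketch pins down the horosphere \emph{height}, which is exactly what ``$h(\SSS)\subseteq\SSS$'' requires.

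The paper sidesteps both issues. Rather than proving $h$ preserves a pattern, it shows (Claim~\ref{claim1}) that for a \emph{uniform} $N=k!$ and every $y\in K^G$, the power $y^N$ lies in $\bigcap_{s=1}^k\Gamma_s$, where $\Gamma_s$ is the (discrete, by Lemma~\ref{lem-discretepathoro}) stabilizer of the horosphere family $\partial\HH_s$. The $k!$ step guarantees $y^N$ returns cusp $j$ to cusp $j$ (up to $\Gamma$), so that $y^Nr\in G_j$ for some $r\in\Gamma$; one then decomposes $y^Nr=A_\lambda N_\lambda$ inside $G_j$ and reads off from $[(\gamma^m)^{y^Nr}]=[\gamma^m]$ in $H_1(Q,\bQ)$ that the homothety scale $A_\lambda$ equals $1$, i.e.\ $y^Nr$ preserves $\partial\HH_{x_j}$. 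Discreteness of $K^G$ then follows from Lemma~\ref{lem:power} (bounded powers land in a lattice), not from Lemma~\ref{lem-discretepathoro} applied to $K^G$ itself. Your plan would need both the $k!$ return-to-same-cusp step and this scale-factor computation to be made to work, and even then the conclusion is about $y^N$, not $y$.
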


Here, the lattice may or may not be torsion--free, and may or may not be uniform.
As remarked  in the introduction, we only consider lattices in $\so(n,1)$  and  $\su(n,1)$.
\begin{proof}[Proof of Theorem~\ref{thm:q-b1}]
We begin by passing to a torsion-free finite index subgroup $\Ga'$ of $\Ga$, and by replacing $K$ with the corresponding finite index subgroup
of $K$ given by the corresponding intersection $K\cap\Ga'$.
The resulting subgroup of $K$ is commensurable with $K$ and hence has the same commensurator in $G$ as $K$.
Moreover, by restricting the
quotient map $\Ga\longrightarrow Q$ to $\gam'$, we get a finite index subgroup $Q'<Q$ which also has positive first Betti number. 
Thus without loss of generality,
we will assume that $\Ga=\Ga'$.

Recall that we write
\[K^G=\langle K^g  \mid g\in\comme_G(K)\rangle\] for the subgroup generated by the collection $\{K^g\}$, as 
$g$ ranges over $\comme_G(K)$.
By Proposition~\ref{commens=commesn}, we have that $\comme_G(K)<\comme_G(\gam)$.
By Theorem~\ref{thm:comm-nonab-b1} and Observation~\ref{obs:monoid-hom}, we have that if $y\in K^G$ then
$y$ preserves homological lines with holes in $Q$.

By hypothesis, we have
$H^1(Q,\bR)\neq 0$. Writing $S=X/\Ga$ as usual, we have that $H^1(S,\bR)\neq 0$ since $Q$ is a quotient of $\Ga$ and since
$\gam=\pi_1(S)$. We have that
$S$ is metrically complete and is either compact or noncompact, which yields two possible cases concerning cohomology:
\begin{enumerate}
\item
$S$ is compact. By Theorem \ref{theorem-hodge}, there is a harmonic form $\omega$ on $S$ which represents the pullback of a nontrivial cohomology class of $Q$.
\item
$S$ is not compact. This case bifurcates into further possibilities:
\begin{enumerate}
\item
The composition \[H^1(Q,\bR)\longrightarrow H^1(S,\bR)\longrightarrow H^1(\partial S,\bR)\] has a nontrivial kernel,
where the first map is the pullback along  the quotient map
$\Ga\longrightarrow Q$ and the second map is the pullback along the inclusion map $\partial S\longrightarrow S$. 
Note  that the first arrow is an injection. Furthermore, \[H^1((S, \partial S),\,\bR) =
H^1_c (S,\, \bR) = H^1_{(2)}(S,\, \bR);\] see~\cite[Lemma 1.93]{lueckbook}.
Hence, by Theorem \ref{theorem-hodgel2}, there is a nonzero cohomology class of $S$
that is represented by
a nonzero $L^2$ harmonic form $\omega$ such that $[\omega]\in H^1_{(2)}(S,\bR)$ is the pullback of a cohomology class of $Q$.
\item
The composition \[H^1(Q,\bR)\longrightarrow H^1(S,\bR)\longrightarrow H^1(\partial S,\bR)\]
is injective.
\end{enumerate}
\end{enumerate}
In case (2), we interpret $\partial S$ in the usual way, i.e.~by removing a small horoball around the cusps of $S$, whereby the boundary
of $S$ becomes the image of the horosphere bounding the horoball.

 Suppose first that there exists a nontrivial (possibly $L^2$) harmonic form on
$S=X/\Ga$ representing a pullback of a nontrivial class
in $H^1(Q,\bQ)$, as in case (1) or (2)(a) above. Then $K^G$ is discrete by Corollary
~\ref{cor:Q-form}. That $\comme_G(K)$ is discrete now follows from Lemma~\ref{lem:comm-discrete0}.

If no such form exists, then
 we are in case (2)(b). Writing $q\colon \Ga\longrightarrow Q$ for the quotient map,
we have that \[q_*\circ i_* \colon H_1(\partial S,\bQ)\longrightarrow  H_1(Q,\bQ)\]
is surjective (where $i: \partial S \longrightarrow S$ denotes  inclusion).
Because $H_1(Q,\bQ)\neq 0$ by hypothesis, there exists a finite collection of cusps $\{T_1,\ldots,T_k\}$ of $S$ which contain
homology classes $z_j \in H_1(T_j,\bQ)$ for which 
$$q_* \circ i_* (z_j) \neq 0.$$
For $1\leq j\leq k$, let $t_j \in \partial X$ denote the basepoint (at infinity) of a horoball lift of $T_j$ to $X$. Let 
	$\TT_j$ denote the set of the $\Ga-$translates of $t_j$ in $\partial X$. Also, let $\HH_j$ (resp.~$\partial \HH_j$)
	denote the collection of horoballs 
	(resp.~horospheres) in $X$ that are lifts of $T_j$ (resp.~$\partial T_j$).
These are an instance of a \emph{discrete pattern} in the sense of Schwartz \cite{schwartz-pihes}, see Definition~\ref{defpatternhoro} below,
 for instance.
	Let $\Ga_j < G$ denote the subgroup preserving the collection $\partial \HH_j$.  
	By \cite[Propositions 5.3 and 5.4]{mahan-pattern} (see Lemma \ref{lem-discretepathoro} for instance),
	the group $\Ga_j$ is a lattice containing $\Ga$ as a subgroup of finite index.

\begin{claim}\label{claim1}
	There is an $N>0$ such that for all 
	$y\in K^G$, we have \[y^N\in \bigcap_{s=1}^k\Gamma_s.\]
\end{claim}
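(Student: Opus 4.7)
The plan is to bootstrap the triviality of the pseudo-action of $y \in K^G$ on $H_1(Q,\bQ)$---already recorded immediately above via Theorem~\ref{thm:comm-nonab-b1} and Observation~\ref{obs:monoid-hom}---into a uniform power $N$ for which $y^N$ simultaneously lies in each $\Gamma_s$, $s=1,\ldots,k$.

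First I would note that $y\in\comme_G(\Gamma)$ (by Proposition~\ref{commens=commesn}) implies that $\Gamma$ and $y^{-1}\Gamma y$ are commensurable lattices and hence share their set of parabolic fixed points in $\partial X$. Consequently $y^{-1}$ permutes these points and induces a permutation $\sigma_y$ of the finite set $\{1,\ldots,m\}$ of all $\Gamma$--cusp--orbits.

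Next I would use triviality of the pseudo-action to force $\sigma_y$ to restrict to a self-permutation of $\{1,\ldots,k\}$. Fix $s\le k$ and choose $p_s\in P_s$ whose image in $H_1(Q,\bQ)$ is nonzero, which exists because $q_*i_*z_s\neq 0$. Writing $y^{-1}(t_s)=\gamma_s\cdot t_{\sigma_y(s)}$ for some $\gamma_s\in\Gamma$, for $N_0\gg 0$ the element $y^{-1}p_s^{N_0}y$ lies in $\Gamma\cap\gamma_s P_{t_{\sigma_y(s)}}\gamma_s^{-1}=\gamma_s P_{\sigma_y(s)}\gamma_s^{-1}$, so $y^{-1}p_s^{N_0}y=\gamma_s p'\gamma_s^{-1}$ with $p'\in P_{\sigma_y(s)}$. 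Since commutators die in $H_1(Q,\bQ)$, we have $[y^{-1}p_s^{N_0}y]_Q=[p']_Q\in V_{\sigma_y(s)}$, where $V_j\subset H_1(Q,\bQ)$ denotes the image of $H_1(T_j,\bQ)$. Triviality of the pseudo-action forces this class to equal $N_0[p_s]_Q\neq 0$, so $V_{\sigma_y(s)}\neq 0$, whence $\sigma_y(s)\in\{1,\ldots,k\}$. Invertibility of $y$ promotes $\sigma_y$ to a self-permutation of $\{1,\ldots,k\}$, so $y^{k!}$ fixes each $\Gamma$--orbit $[t_s]$ and preserves the basepoint set of each $\mathcal{H}_s$.

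Finally I would lift preservation of basepoints to preservation of the horoballs themselves. Every horoball in $\mathcal{H}_s$ is determined by a basepoint in the $\Gamma$--orbit of $t_s$ together with a canonical $\Gamma$--invariant height, and $y^{k!}$ acts on $X$ as an isometry commensurating each $P_s$. The possible height shifts at each basepoint orbit form a finite set governed by the indices $[P_s:P_s\cap y^{-k!}P_sy^{k!}]$, so a further uniform finite power $N_1$ absorbs them, yielding $y^{k!\cdot N_1}\in\Gamma_s$ for every $s$ via Lemma~\ref{lem-discretepathoro}. Taking $N=k!\cdot N_1$ (uniform in $y$) concludes the argument. \emph{The main obstacle} is precisely this last step: ruling out an a priori dilation (nontrivial height shift) at a fixed basepoint, where one must exploit the discreteness of the horoball pattern together with the commensurability of cusp subgroups to confine the height shifts to a finite, $y$--independent set.
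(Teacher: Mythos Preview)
Your first two steps---using the trivial pseudo-action to trap $\sigma_y$ in $\{1,\ldots,k\}$ and passing to $y^{k!}$---parallel the paper's approach. The genuine gap is your third step. You propose to bound the height shift (the homothety factor $\lambda$ of $y^{k!}$ at a fixed cusp) by the indices $[P_s:P_s\cap y^{-k!}P_sy^{k!}]$ and then annihilate it by a further uniform power $N_1$. Neither move is valid. The height shift $\log\lambda$ is a real number, and passing from $y^{k!}$ to $y^{k!\cdot N_1}$ \emph{multiplies} it by $N_1$; no power absorbs a nonzero shift. Nor do the indices control $\lambda$: already in $\pslr$ the diagonal element $\mathrm{diag}(p,p^{-1})$ commensurates $\pslz$, fixes $\infty$, and has $\lambda=p^{2}$, so as $p$ ranges over primes both the index and $\lambda$ are unbounded. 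There is thus no uniform $N_1$, and Lemma~\ref{lem-discretepathoro} cannot be invoked until you already know the horosphere pattern is preserved---which is precisely what you are trying to prove.

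The paper closes this gap not by taking further powers but by invoking pseudo-action triviality a second time, quantitatively. Once $y^{N}r\in G_j$ (with $r\in\Gamma$), write its $AN$--decomposition with homothety scale $A_\lambda(y^Nr)=\lambda$. Conjugation by $y^{N}r$ sends the cusp translation $\gamma^{m}$ to another cusp translation whose class in $H_1(Q,\bQ)$ is scaled by $\lambda$; but triviality of the pseudo-action gives $[(\gamma^{m})^{y^{N}r}]=[\gamma^{m}]\neq 0$, forcing $\lambda=1$ outright. Hence $y^{N}r$ preserves the horosphere $\partial\HH_{x_j}$, and since $r\in\Gamma$ already preserves $\partial\HH_j$, so does $y^{N}$. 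No further power is needed; replace your index/absorption argument with this homological scaling computation.
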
 

We complete the proof assuming  Claim~\ref{claim1}.
It follows from
 Claim~\ref{claim1}  that each element of $K^G$ has a uniformly bounded power  contained in the discrete group $\cap_{s=1}^k\Gamma_s$. 
Hence $K^G $ is discrete by Lemma~\ref{lem:power}. Lemma~\ref{lem:comm-discrete0}
now implies that
$\comme_G(K)$ itself is discrete. 
\end{proof}

\begin{proof}[Proof of Claim \ref{claim1}:]
By  Theorem~\ref{thm:comm-nonab-b1}, we know that
	$K^G$ preserves homological lines with holes in $Q$. Choose parabolic subgroups
	$\{G_1,\ldots,G_k\}$ of $G$, which we use to identify $\pi_1(T_j)$ as a subgroup of $\pi_1(S)$ for $1\leq j\leq k$,
	 and let $\{x_1,\ldots,x_k\}\subset\partial X$ be their respective fixed points. 
	Let $\gamma\in \Ga$ be a parabolic isometry representing $z_j \in H_1(T_j,\bZ)$,
	and such that $q_\ast\circ i_\ast (z_j)$ is non-zero. 
	Replacing $\gamma$ by a conjugate in $\Ga$ if necessary, $\gamma$ fixes $x_j$ and hence lies in $G_j$. 
	Let $y\in K^G$.
	Since $y$ preserves homological lines with holes in $Q$,
	 there exists a positive integer $m$ such that \[[(\gamma^m)^y] = [\gamma^m] =m\cdot q_\ast\circ i_\ast (z_j),\]
	 where $[\, \cdot\,]$ denotes the corresponding
	homology class in $H_1(Q,\bZ)$, and where elements of $\gam$ acquire homology classes in $H_1(Q,\bZ)$
	via $q_\ast$. Since $y\in G$, we have that $(\gamma^m)^y$ is also parabolic.
	 Since $y$ commensurates $\Ga$ (by Proposition~\ref{commens=commesn})
	 and preserves homological lines with holes in $Q$,
	 we have that there exists $r \in \Ga$ such that $(\gamma^m)^{yr}\in G_{\ell}$ for some
	 $1\leq\ell\leq k$.
	 Thus, $y$ preserves homological lines with holes in $Q$ but may ``change
	 the cusp" which supports a given cuspidal homology class.
	 Since there are only $k$ many cusps of $S$ which contribute to the homology of $Q$ via
	 $q_\ast\circ i_\ast$, we may assume that $(\gamma^m)^{y^N}$ is conjugate into
	 $G_{j}$ by an element $r\in\Ga$, for $N=k!$.
	We thus have that $y^Nr \in G_{j}$.
	  
	  Now, any element of the parabolic subgroup $G_j$ can be decomposed as $A_\lambda N_\lambda$,
	  where $A_\lambda$ acts on $\partial X \setminus \{x\}$ by a conformal homothety and $N_\lambda $ acts by an isometry. Here,
	  	the metric on   $\partial X \setminus \{x\}$ is obtained by identifying it with a reference horosphere in $X$ based at $x$ via  projection along geodesics from $x$.
		
	  For $X=\bH^n$, these are all Euclidean similarities and for $X=\bC\bH^n$, these are all Heisenberg similarities
	  (see \cite[Section 8.1]{schwartz-pihes}). In particular,
	for any $j$, and for any $g \in G_j$, $g$ scales all distances on the reference horosphere by 
 a fixed $r_g >0$. We call $r_g$ the \emph{scale factor} of $g$. Let \[\widehat{g}: H_1(T_j) \longrightarrow H_1(T_j)\]
 denote the induced map on $H_1(T_j)$ \emph{thought of as a
 	subset of $\partial X \setminus \{x\}$.} Here, we use the notation $\widehat{g}$ in place of $g_*$ to avoid confusing with the action
 on homology of the cusp \emph{per se}.
 Since $g$ scales the length of all
elements by $r_g$, it follows that $\widehat{g} (u) = r_g\cdot u$
for all $u \in H_1(T_j)$.
	  Let $A_\lambda (y^Nr) >0$ denote the scale factor of the homothety component of
	  $y^Nr$. Write $\HH_{x_j} \in \HH_j$ for the horoball in $X$ based at $x_j$.
	  
	  Since 
	\[[(\gamma^m)^{y^Nr}] = A_\lambda (y^Mr)  [\gamma^m]\in H_1(Q,\bQ),\]
	the scale factor $A_\lambda (y^Nr)$ must equal one. But $A_\lambda (y^Nr) =1$
	if and only if $y^Nr$ preserves the horosphere $\partial \HH_{x_j}$. Since $r \in \Ga$ necessarily preserves $\partial \HH_j$, it follows
	that $y^N$ stabilizes 
	$\partial \HH_j$, i.e.\ $y^N \in \Ga_j$. Since $y\in K^G$ and $1\leq j\leq k$ were arbitrary, and since
	$\gam_j$ contains $\cap_{s=1}^k\gam_s$ with finite index (as follows easily from
	 Lemma~\ref{lem-discretepathoro}) this completes the proof of the claim.
\end{proof}

\subsection{Applications}
We conclude this section by giving three sets of examples to which Theorem \ref{thm:q-b1} applies.\\

\noindent {\bf Irrational pencils in complex hyperbolic manifolds:} Many cocompact arithmetic lattices in $\mathrm{SU}(2,1)$
admit irrational pencils, i.e.\
$S= X/\Ga$ admits a holomorphic fibration (with  singular fibers) onto a Riemann surface of genus greater than zero.
Let $F$ denote the general fiber and $i: F \longrightarrow S$ denote inclusion. Then $K=i_\ast (\pi_1(F))$ is normal in $\Ga$ and
$Q=\Ga/K$ is a surface group. Theorem \ref{thm:q-b1} applies to show that $\comme_G(K)$ is discrete.
We note that M.\ Kapovich in unpublished work~\cite{kapovich-morse} (see \cite{bmp} for a small generalization) established 
that $K$ is never finitely presented. \\

\noindent {\bf Real hyperbolic manifolds that algebraically fiber:} Agol \cite{agol-vhak} shows
 that hyperbolic 3-manifolds virtually fiber over the circle with surface group fibers. 
 The resulting normal surface subgroups were dealt with in \cite{llr} without the arithmeticity hypothesis. 
 However, a new family of examples of finitely generated (but
not necessarily 
finitely presented) normal subgroups of arithmetic hyperbolic
$n-$manifolds has recently been discovered. A classical result of Dodziuk \cite{dodziuk,anghel} shows 
that the first $L^2-$betti number of a hyperbolic manifold of dimension greater than 2 vanishes. Kielak \cite{kielak}
shows that a cubulated hyperbolic
group $\Ga$ is virtually algebraically fibered (i.e.\ $\Ga$ admits a virtual surjection to $\bZ$ with a finitely generated kernel) if and only if  
$\beta^1_{(2)} (\Ga) = 0$. On the other hand, Bergeron-Haglund-Wise
\cite{bhw} show that standard cocompact arithmetic congruence subgroups $\Ga$ of $\mathrm{SO}(n,1)$ are cubulated.
Thus standard cocompact arithmetic congruence subgroups $\Ga$ of $\mathrm{SO}(n,1)$ admit finitely generated normal subgroups $K$ with
quotient $\bZ$. This furnishes a family of examples $K$ to which 
Theorem \ref{thm:q-b1} applies  to show that $\comme_G(K)$ is discrete (since $b_1(Q) = b_1(\bZ) = 1$
in this case). 

\noindent {\bf Uncountably many pairwise non-isomorphic $2$--generated groups:} P.~Hall produced uncountably
many pairwise--non-isomorphic quotients of a free group $F_2$ on two generators (see III.C.40 of~\cite{delabook}, for instance).
Evidently, the free group on two generators can be realized as a lattice in a rank one simple Lie group. Hall's construction produces
uncountable families of $2$--generated torsion--free solvable groups, and each of his groups surjects to $\bZ$. This furnishes a continuum's
worth of thin normal subgroups of lattices to which Theorem~\ref{thm:q-b1} applies.

\appendix
\section{Discrete Patterns of horoballs}\label{sec-app}
In the course of the proof of Theorem~\ref{thm:q-b1} (Case 2b), we have used the fact that a certain \emph{discrete pattern of horoballs}
is preserved by $K^g$. Since the notion of a discrete pattern also makes its appearance in earlier approaches to Question~\ref{mainq}, we give a quick account here.

Let $G$ be a rank one  semi-simple Lie group and let $X$ be the associated symmetric
space. The space $X$ is, in a natural way, a Riemannian manifold endowed with a left--invariant metric~\cite{Helgason}.
Following \cite{schwartz-pihes,schwartz-inv,mahan-pattern,biswasmj} we define the following
(see \cite[Definition 1.6]{mahan-pattern} in particular):

\begin{defn}\label{defpatternhoro} Let $\Ga < G$ be a lattice and $S = X/\Ga$.
	A \emph{ $\Ga-$discrete pattern
		of points} on $X$ is a nonempty $\Ga-$invariant set $\SSS \subset X$ such that $\SSS/\Ga$ is finite. 
	
Let $\Ga < G$ be a non-uniform lattice,
and let $S = X/\Ga$. A \emph{ $\Ga-$discrete pattern
	of horoballs} in $X$ is a non-empty $\Ga-$invariant collection $\SSS \subset X$ of closed horoballs such that
$\SSS/\Ga$ is a disjoint union of neighborhoods of cusps. 
\end{defn} 

\begin{defn}\label{def-patpres} Let $\Ga < G$ be a lattice.
	A subgroup $H$ of $G$ is said to \emph{preserve} a $\Ga-$discrete pattern $\SSS$
	 points  if $h(\SSS) \subset \SSS$ for all $h \in H$.
\end{defn}

Propositions 3.5 and 3.7 of \cite{mahan-pattern}  show that a subgroup $H$ of $G$ 
preserving a $\Ga-$discrete pattern $\SSS$ is closed and totally
disconnected. Since any such subgroup of $G$ is necessarily discrete, we have the following:
\begin{lem}\label{lem-discretepat}\cite[Propositions 3.5 and 3.7]{mahan-pattern}
	Let $\Ga < G$ be a lattice and $\SSS$ a $\Ga-$discrete pattern 
	(of points or geodesics). Then the  subgroup $H$ of $G$  preserving  $\SSS$ is  discrete, and $[H:\Ga]<\infty$.
\end{lem}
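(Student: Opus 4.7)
The plan is to combine the Zariski density dichotomy recalled earlier in Section~\ref{sec:commensurator} with the transitivity of $G$ on the symmetric space $X$. Since $\Gamma\subset H$ by the definition of pattern preservation, and since the lattice $\Gamma$ is Zariski dense in $G$, the group $H$ is Zariski dense in $G$ as well. Hence by the dichotomy $H$ is either discrete or dense in $G$.

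Next I would verify that $H$ is closed in $G$. In the point case, $\SSS\subset X$ is locally finite because $\SSS/\Gamma$ is finite and $\Gamma$ acts properly discontinuously on $X$; in the geodesic case, the union $\SSS\subset X$ is a closed subset of $X$ by hypothesis. If $h_n\in H$ converges to $h\in G$ and $s\in\SSS$, then $h_n(s)\in\SSS$ converges to $h(s)$, and $h(s)\in\SSS$ by the above closedness (using local finiteness to conclude the limit actually lies on $\SSS$ in the point case). Thus $h$ preserves $\SSS$, and $H$ is closed.

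These two facts rule out the density alternative. If $H$ were dense in $G$, then $H=G$ because $H$ is closed. But then $G$ itself would preserve $\SSS$, and the transitivity of $G$ on $X$ (in the point case), respectively the two-transitivity of rank-one $G$ on the boundary $\partial X$ and hence the transitivity on oriented bi-infinite geodesics (in the geodesic case), would force $\SSS$ to exhaust all of $X$. This contradicts $\SSS/\Gamma$ being finite, respectively a finite union of geodesics inside the higher-dimensional manifold $S=X/\Gamma$. Hence $H$ is discrete.

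Finally, since $H$ is discrete and contains the lattice $\Gamma$, the covolume $\mathrm{vol}(G/H)\le\mathrm{vol}(G/\Gamma)$ is finite, so $H$ is itself a lattice in $G$ and $[H:\Gamma]$ is the finite ratio of these covolumes. I expect the main obstacle to be the geodesic part of the transitivity step: one has to argue that the union of $\SSS$ is a proper subset of $X$, which requires knowing that rank-one $G$ acts transitively on pairs of distinct boundary points and that $S=X/\Gamma$ has dimension at least two, so that a finite union of geodesics in $S$ does not exhaust it.
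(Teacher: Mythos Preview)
Your argument is correct and self-contained. The paper, by contrast, does not prove this lemma directly: it simply invokes Propositions~3.5 and~3.7 of \cite{mahan-pattern}, which assert that the pattern-preserving subgroup $H$ is closed and totally disconnected; a closed, totally disconnected subgroup of a Lie group is necessarily discrete, and the finite-index statement then follows exactly as in your final paragraph. Your route replaces the ``totally disconnected'' input from \cite{mahan-pattern} by the Zariski-density dichotomy recalled in Section~\ref{sec:commensurator}, combined with the transitivity of $G$ on points (respectively on bi-infinite geodesics) of $X$. This has the advantage of staying internal to the present paper and of being more elementary, at the cost of using the rank-one hypothesis (for transitivity on geodesics) and the assumption $\dim X\geq 2$; the cited argument in \cite{mahan-pattern} is formulated in somewhat greater generality. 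One small point worth tightening in the geodesic case of your closedness step: from $h(\ell)\subset\bigcup\SSS$ you should observe that $\SSS$ is countable (since $\Gamma$ is countable and $\SSS/\Gamma$ is finite) and that two distinct bi-infinite geodesics in a negatively curved $X$ meet in at most one point, so the geodesic $h(\ell)$ must in fact coincide with a member of $\SSS$ rather than merely lie in the union.
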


Propositions 5.3 and 5.4 of \cite{mahan-pattern} (see also \cite[Theorem 3.11]{mahan-relrig}) prove that the  subgroup $H$
of $G$ preserving a $\Ga-$discrete pattern
of horoballs is closed and totally disconnected. It follows that:

\begin{lem}\label{lem-discretepathoro}\cite[Propositions 5.3 and 5.4]{mahan-pattern}
	Let $\Ga < G$ be a non-uniform lattice in a rank one Lie group and $S = X/\Ga$,
	where $X$ is the associated symmetric space. Let $\SSS$ be a $\Ga-$discrete pattern of horoballs.
	Then the  subgroup $H$ of $G$  preserving  $\SSS$ is  discrete, and $[H:\Ga]<\infty$. 
\end{lem}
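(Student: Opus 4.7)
I would follow the same outline as in Lemma~\ref{lem-discretepat}: first show $H$ is closed in $G$, then show that its identity component $H^0$ is trivial, and finally combine these with the lattice hypothesis on $\Ga$ to extract $[H:\Ga]<\infty$.

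\emph{Closedness of $H$.} I plan to parametrize horoballs in $X$ by pairs $(\xi,c)\in\partial X\times\bR$, where $\xi\in\partial X$ is the center and $c$ is the level of the defining Busemann function relative to a fixed basepoint $x_0\in X$. The hypothesis that $\SSS/\Ga$ is a disjoint union of horoball neighborhoods of cusps translates into the assertion that the horoballs of $\SSS$ are pairwise disjoint with Busemann levels that cannot accumulate; together with the proper discontinuity of $\Ga$ on $X$, this makes $\SSS$ a closed subset of $\partial X\times\bR$. Given $h_n\to h$ in $G$ with $h_n\in H$ and any $B\in\SSS$, continuity of the $G$-action on the horoball parameter space gives $h_n(B)\to h(B)$, and closedness of $\SSS$ there forces $h(B)\in\SSS$. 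Hence $h\in H$.

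\emph{Triviality of $H^0$.} Since $H^0$ is connected and acts continuously on the locally finite set $\SSS$, the orbit $H^0\cdot B$ must reduce to $\{B\}$ for every $B\in\SSS$. In particular, $H^0$ fixes the center of every horoball in $\SSS$. These centers are exactly the parabolic fixed points of $\Ga$, which form a dense subset of $\partial X$ because $\Ga$ is a non-uniform lattice in a rank one simple Lie group (so the limit set fills the entire Furstenberg boundary). Consequently $H^0$ fixes $\partial X$ pointwise; in a rank one Lie group $G$ this forces $H^0\subset Z(G)$, and since $H^0$ is connected while $Z(G)$ is finite, $H^0=\{1\}$.

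\emph{Finite index and conclusion.} A closed, totally disconnected subgroup of the Lie group $G$ is automatically discrete, so $H$ is discrete. The containment $\Ga\subset H$ with $\Ga$ of finite covolume forces $H$ to have finite covolume too, so $H$ is itself a lattice containing $\Ga$ and hence $[H:\Ga]<\infty$ by comparison of covolumes. The main obstacle I expect is the closedness step: one must carefully check that the horoballs of $\SSS$ do not accumulate in the parameter space $\partial X\times\bR$, which uses the pairwise disjointness built into Definition~\ref{defpatternhoro} together with the bounded geometry of cusp cross-sections in $X/\Ga$. Once that geometric input is in hand, the triviality of $H^0$ and the finite-index conclusion follow along standard lines in the theory of discrete subgroups of Lie groups.
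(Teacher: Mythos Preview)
Your proposal is correct and follows essentially the same approach as the paper: the paper does not give an independent proof of this lemma but rather sketches, just before the statement, that the cited Propositions 5.3 and 5.4 of \cite{mahan-pattern} yield $H$ closed and totally disconnected (hence discrete), after which $\Ga<H$ forces $[H:\Ga]<\infty$. Your three-step outline (closedness, triviality of $H^0$, finite index from covolume) is exactly this argument with the details of the cited external propositions filled in.
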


As an aside, we mention that 
for lattices in $\pslr=\so(2,1)$,
there are more  direct ways of understanding discrete patterns, and in particular Proposition~\ref{prop-generalG} above,
that are inspired by ideas from
Teichm\"uller theory. In this context, one can view the commensurator of a nontrivial harmonic form as explicitly producing a $\Ga$--discrete
pattern. Specifically, one can use the fact that a harmonic form is the real part of an abelian differential on the Riemann 
surface $\bH^2/\Ga$. In the case of a cocompact lattice,
one can use the fact that the set of zeros of the form is nonempty and discrete, and preserved by the commensurator of  the form. Then 
Lemma \ref{lem-discretepat} gives discreteness of the commensurator itself. In the case of a nonuniform  lattice, one uses saddle connections
in the Baily--Borel--Satake compactification of $\bH^2/\gam$,  and the fact that these are invariant under  the commensurator. Again, Lemma
\ref{lem-discretepat} gives discreteness of the commensurator.

\section*{Acknowledgments} The authors are grateful to D. Witte Morris, T. N. Venkataramana, S. Varma and C. S. Rajan for
 helpful discussions and correspondence, in particular for telling us about Proposition \ref{commens=commesn}. 
 Special thanks are due
to D. Fisher and W. van Limbeek for many discussions and for catching an error in an earlier version of this paper.
 We thank D. Wise for 
his lectures on coherence at IMPAN, Warsaw, and for  references on $L^2-$Betti numbers \cite{dodziuk}.
We thank Y. Shalom for his interest and perceptive comments.


\bibliographystyle{amsplain}

\printindex
\end{document}